\newcommand{\Sh}{\textnormal{Sh}}
\newcommand{\normq}[1]{\left\| #1\right\|_q}
\newcommand{\Erdos}{Erd\H{o}s}
\newcommand{\Renyi}{R\'enyi}
\newcommand{\E}{\mathbf{E}}
\renewcommand{\Pr}{\mathbf{P}}
\newcommand{\Real}{\mathbb{R}}
\newcommand{\Lq}{L^q}
\newcommand{\cF}{\mathcal{F}}
\newcommand{\cK}{\mathcal{K}}
\newcommand{\cL}{\mathcal{L}}
\newcommand{\indc}{\mathds{1}}
\newcommand{\cKn}{\cK_{n}}
\newcommand{\Mn}{M_n}
\newcommand{\cLn}{\cL_n}
\newcommand{\Dn}{D_n}
\newcommand{\mup}{\mu_{n,p}}
\newcommand{\mun}{\mu_{n}}
\newcommand{\tmup}{\tilde{\mu}_{n,p}}
\newcommand{\tmun}{\tilde{\mu}_{n}}
\newcommand{\wu}{w_u}
\newcommand{\wg}{w_g}
\newcommand{\argmin}{{\textnormal{arg}\min}}
\newcommand{\df}[1]{\textnormal{ d} #1}
\newtheorem{theorem}{Theorem}
\newtheorem*{theorem*}{Theorem}
\newtheorem{corollary}[theorem]{Corollary}
\newtheorem{lemma}[theorem]{Lemma}
\newtheorem{proposition}[theorem]{Proposition}
\theoremstyle{remark}
\newtheorem{remark}[theorem]{Remark}
\theoremstyle{definition}
\newtheorem{definition}{Definition}
\begin{document}

\begin{frontmatter}[classification=text]

\title{The Shadow Knows: Empirical Distributions of Minimum Spanning Acycles and Persistence Diagrams of Random Complexes} 

\author[nico]{Nicolas Fraiman\thanks{Supported by UNC's Junior Faculty Award funded by IBM and R.J. Reynolds Industries, and NSF grant DMS-2134107.}}
\author[sayan]{Sayan Mukherjee\thanks{Supported by funding from NSF DEB-1840223, NIH R01 DK116187-01, HFSP RGP0051/2017, NSF DMS 17-13012, and NSF CCF-1934964 and the Alexander
von Humboldt Foundation. High-performance computing is partially supported by grant
2016-IDG-1013 from the North Carolina Biotechnology Center. Sayan Mukherjee would
also like to acknowledge the German Federal Ministry of Education and Research within the
project Competence Center for Scalable Data Analytics and Artificial Intelligence (ScaDS.AI)
Dresden/Leipzig (BMBF 01IS18026B).}}
\author[gugan]{Gugan Thoppe\thanks{Supported partially by IISc Start Up Grants SG/MHRD-19-0054 and SR/MHRD-19-0040, partially by DST SERB's Core Research Grant CRG/2021/008330, and partially by the Pratiksha Trust Young Investigator Award.}}

\begin{abstract}
In 1985, Frieze showed that the expected sum of the edge weights of the minimum spanning tree (MST) in the uniformly weighted graph converges to $\zeta(3)$. Recently, Hino and Kanazawa extended this result to a uniformly weighted simplicial complex, where the role of the MST is played by its higher-dimensional analog---the Minimum Spanning Acycle (MSA). Our work goes beyond and describes the histogram of all the weights in this random MST and random MSA. Specifically, we show that their empirical distributions converge to a measure based on a concept called the shadow. The shadow of a graph is the set of all the missing transitive edges and, for a simplicial complex, it is a related topological generalization. As a corollary, we obtain a similar claim for the death times in the persistence diagram corresponding to the above weighted complex, a result of interest in applied topology.
\end{abstract}
\end{frontmatter}


\section{Introduction}
\label{s:introduction}

Random graphs, as models for binary relations, deeply impact discrete mathematics, computer science, engineering, and statistics. However, modern-day data analysis also involves studying models with 
higher-order relations such as random simplicial complexes \cite{kahle2014topology}. Our contributions here are vital from both these perspectives. Specifically, we consider a mean-field model for random graphs and random simplicial complexes and provide a complete description of the distribution of weights in the Minimum Spanning Tree (MST) and its higher dimensional analog---the Minimum Spanning Acycle (MSA). As a corollary, we also obtain the distribution of the death times in the associated persistence diagram. 

To help judge our contributions, we refer the reader to Robert Adler's 2014 article \cite{adler2014article}. There he summed up the state-of-the-art in random topology as follows: while a lot is known about the asymptotic behaviors of the sums of MST and MSA weights and also their associated death times, \emph{almost nothing} is known about the individual values. 
The work in \cite{skraba2017randomly} addressed this gap partially. There, the distributions of the extremal MSA  weights and the extremal death times were studied. In this work, we go beyond and describe the behavior in the bulk. We emphasize that our result is new, even in the graph case.


\subsection{Overview of Key Contributions}
We now provide a summary of our main result along with its visual illustration. The necessary background is given in parallel. 

A simplicial complex $\cK$ on a vertex set $V$ is a collection of subsets of $V$ that is closed under the subset operation. Trivially, every graph is a simplicial complex. However, the closure is what distinguishes a simplicial complex from a hypergraph, the other standard model for studying higher-order relations. In particular, all simplicial complexes are hypergraphs, but the reverse is not true. We refer to an element of $\cK$ with cardinality $k + 1$ as a $k$-dimensional face or simply a $k$-face. Similarly, the $k$-skeleton of $\cK,$ denoted $\cK^k,$ refers to the subset of faces with dimension less than or equal to $k.$

Let $d \geq 1$ and $(\cKn, \wu)$ be  the weighted simplicial complex, where i.) $\cKn$ is the complete $d$-skeleton on $n$ vertices, i.e., $\cKn$ is the set of all subsets that have cardinality less than or equal to $d + 1;$ and ii.) $\wu$ is a weight function such that $\wu(\sigma)$ is an independent uniform $[0, 1]$ random variable if $\sigma$ is a $d$-face, and zero otherwise. Let $M_n$ be a $d$-dimensional MSA\footnote{We provide the formal definition of an MSA in Section~\ref{sec:resultsterm}, but the reader unfamiliar with this term can set $d = 1$ and replace MSA with a MST to read ahead.} (or $d$-MSA in short) in  $(\cKn, \wu).$ Then our main result can be informally stated as follows (see Section~\ref{sec:resultsterm} for the details). 

\begin{theorem*}{(\textbf{Informal summary of our Main Result})} As $n \to \infty,$ the empirical measure related to $\{n \wu(\sigma): \sigma \in \Mn\}$ converges to a deterministic measure $\mu$ related to the asymptotic shadow density of the $d$-dimensional Linial-Meshulam complex $Y(n, c/n),$ $c \geq 0;$ see \eqref{d:limiting.bulk.measure} for the exact definition of $\mu.$
\end{theorem*}

Intuitively, our result says that the normalized count of the $M_n$ face weights---after being scaled by $n$---that lie in any subset of the real line asymptotically converges to that subset's measure under $\mu.$ The next couple of paragraphs briefly explain what this measure $\mu$ is. 

Recall that the \Erdos-\Renyi\ graph, denoted by $G(n, p),$ is a random graph on $n$ vertices where each edge is present with probability $p$ independently. The $d$-dimensional Linial-Meshulam complex $Y_d(n, p)$ or simply $Y(n, p)$ is an analog of this model in higher dimensions. This random complex has $n$ vertices and the complete $(d - 1)$-skeleton; further, the $d$-faces are included with probability $p$ independently. It is easy to see that $Y_1(n, p)$ and $G(n, p)$ are equivalent. 


The shadow of a graph $G$ is the set of all the missing transitive edges. That is, it includes a vertex pair $\{u, v\},$ if $G$'s edge set excludes such a pair, but contains an alternative path from $u$ to $v$ via other edges. Clearly, each vertex pair in $G$'s shadow has endpoints in any one component of $G.$ A simplicial complex's shadow is a related topological generalization; see Definition~\ref{d:Shadow} for details. 

The shadow of the random complex $Y(n, c/n)$ was investigated in \cite{linial2016phase}, and its cardinality, after a suitable normalization, was shown to converge to a deterministic constant $s(c).$ Now, $\mu$ is that measure whose density is, loosely, one minus the function made up of these constants for different $c$ values.



\begin{figure}[t!]
\begin{subfigure}{.495\textwidth}
  \centering
  \includegraphics[width=\linewidth]{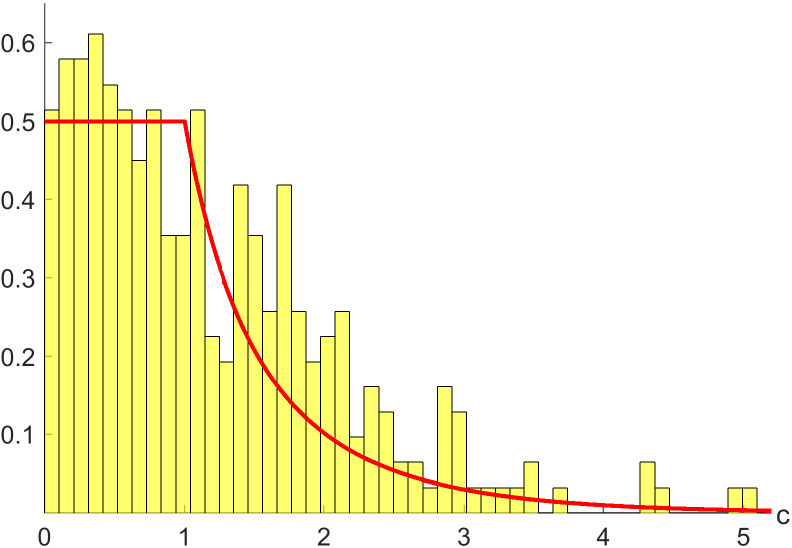}
  \caption{$n = 300, d = 1$}
  \label{fig:Histogram.and.Shadow.d.equal.1}
\end{subfigure}
\begin{subfigure}{.495\textwidth}
  \centering
  \includegraphics[width=\linewidth]{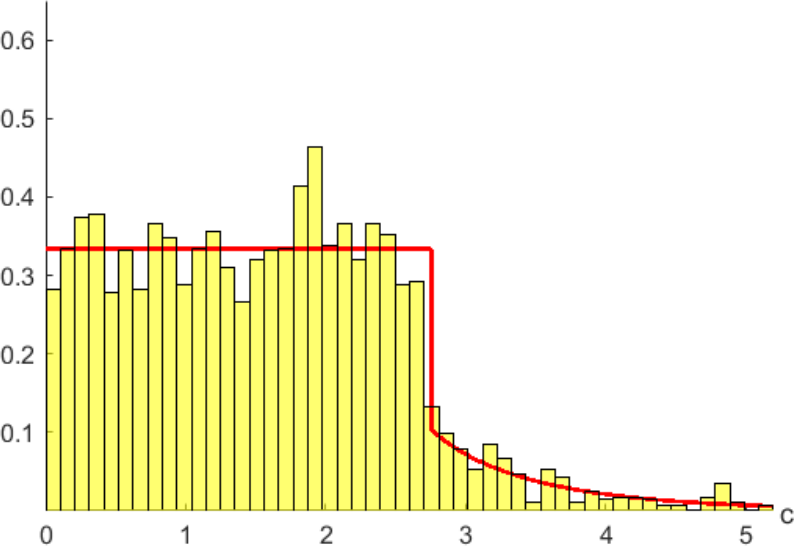}
  \caption{$n = 75, d = 2$}
  \label{fig:Histogram.and.Shadow.d.equal.2}
\end{subfigure}
\caption{Normalized histogram (yellow) of $\{n \wu(\sigma): \sigma \in \Mn\}$ and the density (red) of the shadow based measure $\mu$ given in \eqref{d:limiting.bulk.measure}.
\label{fig:Histogram.and.Shadow}}
\end{figure}

Figure~\ref{fig:Histogram.and.Shadow} illustrates our main result pictorially. The  two scenarios shown there correspond to two different values of the pair $(n, d).$ The yellow plots show the normalized histogram of the set $\{n \wu(\sigma) : \sigma \in \Mn\},$ while the red curves show the density of the shadow based limiting measure $\mu.$  Observe that the red curves and the top of the yellow plots more or less resemble each other. Our main result loosely states that the difference between them decays to zero, as $n \to \infty.$




The weighted simplicial complex $(\cKn, \wu)$ can also be viewed as the evolving simplicial complex $(\cLn(s))_{s \in [0, 1]},$ where $\cLn(s) := \{\sigma \in \cKn: \wu(\sigma) \leq s\}.$ In this evolution, $\cLn(s)$ monotically grows from a simplicial complex with an (almost surely) empty $d$-skeleton (when $s = 0$) to one that has all the $d$-faces (when $s = 1$). There is a natural persistence diagram associated with this process, which records the birth and death times of the different topological holes that appear and disappear as the process evolves. In \cite[Theorem~3]{skraba2017randomly}, it was shown that the set of death times in the $(d - 1)$-th dimension of this diagram exactly equals the set of weights in the $d$-MSA of $(\cKn, \wu).$ Consequently, the above result can also be stated in terms of the death times in the persistence diagram related to $(\cKn, \wu).$ 


\subsection{Related Work}
\label{sec:Related.Work}
Our work lies at the intersection of two broad strands of research: one concerning component sizes, shadow densities, and homologies of random graphs and random complexes, and the other dealing with the statistics of weights in random MSTs and MSAs. In this section, we look at a few of the historical milestones in these two strands.

\Erdos\ and \Renyi\ were the ones who initiated the first strand with their work in \cite{erdos1959random}. There they showed that $p = \log n/n$ is a sharp asymptotic threshold for connectivity in $G(n, p).$ Also that, if $p = (\log n + c)/n$ for $c \in \Real$ and $n \to \infty,$ then i.) almost all the vertices in $G(n, p)$ lie in one single component, and ii.) the vertices outside this  are all isolated and their number has a Poisson distribution with mean $e^{-c}.$ This result was subsequently refined in \cite{Erdos:1960} and the new statement included the following additional facts: i.) the asymptotic order of the largest component jumps from logarithmic to linear around $p = 1/n,$ and ii.) for $c > 1,$ the largest component in $G(n, c/n),$ denoted $L_n(c),$ satisfies  $|L_n(c)|/n \to 1 - t(c),$ where $|S|$ denotes the cardinality of a set $S,$ and  $t(c)$ is the unique root in $(0, 1)$ of the equation
\begin{equation}
\label{e:Giant.Component.Size}
    t = e^{-c(1 - t)}.
\end{equation}

A graph is a one-dimensional simplicial complex, so one can ask if phenomena like the ones above also occur in random complexes. Since  connectivity is related to the vanishing of the zeroth homology, it is natural to look at the higher order Betti numbers  to answer this question. Such a study was done in \cite{linial2006homological, meshulam2009homological}, and it was found that the $(d - 1)$-th Betti number of $Y(n, p)$ indeed shows a non-vanishing to vanishing phase transition at $p = d \log n/n.$ A separate study \cite{kahle2016inside} also showed that this Betti number converges  to a Poisson random variable with mean $e^{-c},$ when $p = (d \log n - \log (d!) + c)/n$ for $c \in \Real.$

The result on component sizes, in contrast, was not so easy to generalize. The challenge was in coming up with a higher dimensional analog of a component in a simplicial complex. The breakthrough came  in \cite{linial2014extremal} with the introduction of the shadow. The underlying  motivation was that, in a sparse graph, a giant component exists if and only if the  shadow includes a positive fraction of all the possible edges. With this in mind, the behavior of $\Sh(Y(n, c/n)),$ the $d$-dimensional shadow (or $d$-shadow) of $Y(n, p),$ was investigated in \cite{linial2016phase}, and these were the key findings there. One, $|\Sh(Y(n, p))|$ changes from $o(n^{d + 1})$ to $\Theta(n^{d + 1})$ at $p = c_*/n,$ where $c_*$ equals $1$ when $d  = 1,$ but is strictly greater than $1$ for $d \geq 2.$ Two, for $c > c_*,$  $|\Sh(Y(n, c/n))| /\binom{n}{d + 1} \to (1 - t(c))^d,$ where $t(c)$ is the smallest root in $(0, 1)$ of the equation
\begin{equation}
\label{eqn:c tc Alt Relation}
    t = e^{-c(1 - t)^d}.
\end{equation}
Note that \eqref{eqn:c tc Alt Relation} matches  \eqref{e:Giant.Component.Size} when $d = 1.$ Also, for $c > 1,$ $|\Sh(Y_1(n, c/n))| = \Theta(n^2),$ while $|L_n(c)| = \Theta(n).$

The pioneering work in the second strand was done  by Frieze \cite{frieze1985value}. He showed that, given a complete graph on $n$ vertices with uniform $[0, 1]$ weights on each edge, the  expected sum of weights in the MST converges to the constant $\zeta(3) \approx 1.2,$ as $n \to \infty.$ Note that this graph is the $d = 1$ case of our $(\cKn, \wu)$ model. Recently, \cite{hino2019asymptotic} showed that a similar result exists even for the $d$-MSA of $(\cKn, \wu)$ for $d \geq 2.$ 

\begin{figure}
    \centering
    \begin{subfigure}{\textwidth}
        \centering
        \includegraphics[width= 0.95\linewidth]{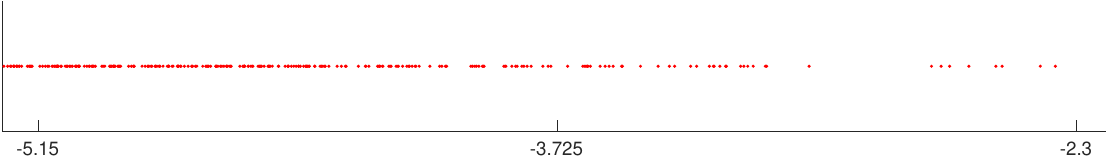}
        \caption{$n = 500, \quad d = 1$}
    \end{subfigure}
\begin{subfigure}{\textwidth}
        \centering
        \includegraphics[width=0.95\linewidth]{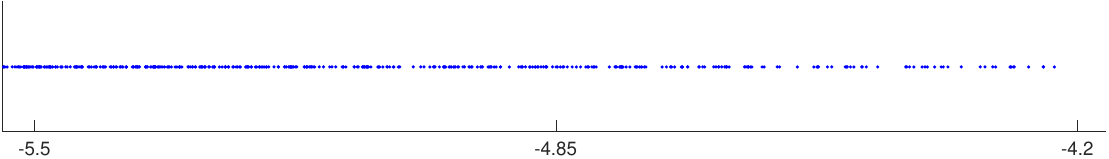}
        \caption{$n = 100, \quad d = 2$}
    \end{subfigure}
\caption{Plot of $\{n \wu(\sigma) - d \log(n) + \log(d!) : \sigma \in \Mn\}.$
    \label{fig:Extremal.Weights}}
\end{figure}

Separately, \cite{skraba2017randomly} studied the extremal weights in the $d$-MSA of $(\cKn, \wu).$  An illustration of a main result there is given in Figure~\ref{fig:Extremal.Weights}. It shows two distinct $(n, d)$ scenarios. In both, a plot of the values in $\{n \wu(\sigma) - d\log (n) + \log(d!): \sigma \in \Mn\}$ is given. This choice of scaling gives more prominence to the extremal weights, which in the two plots are the values on the extreme right. The result in \cite{skraba2017randomly} states that these extremal values converge to a Poisson point process, while the rest go to $-\infty.$

\subsection{Proof Outline}
\label{subsec:proof.overview}
We now briefly describe how we combine the different facts stated above for proving our main result. 
%
%
%
Let $\sigma$ be an arbitrary $d$-face in $(\cKn, \wu)$ and suppose $\wu(\sigma) = s > 0.$ Let $\cLn(s^{-}):= \{\sigma \in \cKn: w(\sigma) < s\}.$ Clearly, the distributions of $\cLn(s^{-})$  and $Y(n, s$), and hence of  their shadows, are identical.  Now, \cite[Lemma~32]{skraba2017randomly} states that $\sigma$ belongs to the $d$-MSA of $(\cKn, \wu)$ if and only if it does not lie in the shadow\footnote{The word `shadow' is not used in \cite{skraba2017randomly}, but the result can be interpreted in terms of the shadow in a straightforward fashion.} of $\cLn(s^{-}).$ Given that $\sigma$ could potentially have been any of the $d$-faces outside of $\cLn(s^{-}),$ the probability that it belongs to the $d$-MSA must then equal one minus the shadow density of $\cLn(s^{-}).$ By shadow density, we mean the shadow's size divided by the total number of potential $d$-faces. Based on this and the shadow density results from \cite{linial2016phase}, the desired result is then easy to see.

\section{Main Result}\label{sec:resultsterm}
We give our main result here along with all the definitions that were skipped in Section~\ref{s:introduction}. All the while, we presume that the reader is well versed with the basics of simplicial homology. If not, the necessary background can be found in \cite[Section 2]{skraba2017randomly} and the references therein. 

For a simplicial complex $\cK,$ we use $\cF^k(\cK)$ and $\beta_k(\cK)$ to denote the set of $k$-faces and the $k$-th reduced Betti number\footnote{Throughout we assume the Betti numbers are defined using real coefficients, as in \cite{linial2016phase}.}, respectively. We drop $\cK$ from these notations, when the underlying simplicial complex is clear. Separately, for a random variable $X,$ we use $\normq{X} = \E(X^q)^{1/q}$ to represent its  $\Lq$-th norm. 




Our first aim is to define spanning acycles \cite{kalai1983enumeration} and MSAs, the main objects of our study. These are topological generalizations of spanning trees and MSTs, respectively. Recall that a spanning tree in a connected graph is a subset of edges that connects all the vertices together without creating any cycles. In that same spirit, a $d$-spanning acycle in a simplicial complex $\cK$ is a subset of $d$-faces which when added to $\cK^{d - 1},$ i.e., the $(d - 1)$-skeleton of $\cK,$ kills the $(d - 1)$-th Betti number of $\cK^{d - 1},$ but does not create any new $d$-cycles. Finally, a MSA in a weighted simplicial complex is simply the spanning acycle with the smallest  possible weight. As shown in \cite[Lemma~23]{skraba2017randomly}, a spanning acycle and, hence, a $d$-MSA exists in $\cK$ if and only if $\beta_{d - 1}(\cK) = 0.$

\begin{definition}[Spanning Acycle]
Let $\cK$ be a simplicial complex with $\beta_{d - 1}(\cK) = 0$. Then, a \emph{$d$-spanning acycle} of $\cK$ is a set $S$ of $d$-faces in $\cK$ such that $\beta_{d - 1}(\cK^{d - 1} \sqcup S) = \beta_d(\cK^{d - 1} \sqcup S) =  0$.
\end{definition}

\begin{definition}[Minimum Spanning Acycle]
Let $(\cK, w)$ be a weighted $d$-complex with $\beta_{d - 1}(\cK) = 0$. Then, a \emph{$d$-MSA} of $(\cK, w)$ is an element of $\argmin_{S} \{w(S)\}$, where the minimum is taken over all the $d$-spanning acycles and $w(S)$ represents the sum of weights of the faces in $S$.
\end{definition}

We remark that an MSA is unique when the $d$-faces in $\cK$ have unique weights, e.g., see \cite{skraba2017randomly}.

Next, we discuss the concept of a shadow. Introduced in \cite{linial2014extremal}, the shadow of a graph is the set of all those missing edges which when added will create a cycle. A simplicial complex's shadow is the topological generalization of this concept. To state the formal definition, we need the notion of the dimension of a simplicial complex. This is the largest $k$ for which $\cF^k \neq \emptyset.$

\begin{definition}[Shadow]
\label{d:Shadow}
For a $d$-dimensional simplicial complex $\cK$ with vertex set $V$ and having the complete $(d - 1)$-skeleton, its $d$-shadow is given by
\[
    \Sh(\cK) = \left\{\sigma \in \tbinom{V}{d + 1} \setminus \cF^d: \beta_d(\cK \sqcup \sigma) = \beta_d(\cK) + 1 \right\},
\]
where $\tbinom{V}{d + 1}$ denotes the set of all $(d + 1)$-sized subsets of $V.$
\end{definition}
%


In analogy with the terms for the spectrum of random matrices, we next define the empirical measures that we study in this work. Let $\Dn$ be the set of death times in the $(d - 1)$-th persistence diagram of $\left(\cLn(s)\right)_{s \in [0, 1]},$ the evolving simplicial complex associated with $(\cKn, \wu).$ Also, let $\delta$ be the Dirac measure.




\begin{definition}[Empirical Measure]
The \emph{empirical measures} corresponding to the face weights in $\Mn$ and the death times in $\Dn$ are the random measures respectively given by
\begin{equation}
    \label{d:empirical.measure}
    \mun := \frac{1}{\binom{n - 1}{d}} \sum_{\sigma \in \Mn} \delta_{n w(\sigma)} \quad \text{ and } \quad \tmun := \frac{1}{\binom{n - 1}{d}} \sum_{\Delta \in \Dn} \delta_{n \Delta}.
\end{equation}
\end{definition}



Finally, we define the deterministic measure $\mu$ which serves as the limit for both $\mun$ and $\tmun.$ Let $t_*$ be the smallest root in $(0, 1]$ of the equation 
\begin{equation}
\label{d:t_*}
    (d + 1)(1 - t) + (1 + dt) \log t = 0.
\end{equation}
It can be checked that $t_* = 1$ for $d = 1$ and is $ < 1$ for $d \geq 2.$ Next, let $c_* = \lim_{t \to t_*^{-}} \psi(t),$ where $\psi(t) := - \log 
 t/(1 - t)^d$ for $t \in (0, 1).$
That is, let
\begin{equation}
\label{d:c_*}
    c_* = 
    \begin{cases}
    1, & d = 1, \\
    -\dfrac{\log t_*}{(1 - t_*)^d}, & d \geq 2.
    \end{cases}
\end{equation}
Now, for $c \geq c_*,$ let $t(c)$ be as in Section~\ref{sec:Related.Work}, i.e., the smallest root in $(0, 1]$ of \eqref{eqn:c tc Alt Relation}. Clearly, $t_* = t(c_*),$ i.e., $t_* = e^{-c_*(1 - t_*)^d},$ for any $d \geq 1.$ 
%
%
Building upon these  terms, let $\mu$ be the measure whose  density is given by
\begin{equation}
\label{d:limiting.bulk.measure}
    \df{\mu(x)} = \frac{1 - s(x)}{d + 1}\df{x}, \quad x \geq 0,
\end{equation}
where
\begin{equation}
    \label{d:s.Val}
    s(x) =
    \begin{cases}
    0, & x \leq c_*, \\
    \big(1 - t(x)\big)^{d + 1}, & x > c_*.
    \end{cases}
\end{equation}
We remark that this $c_*$ is the same constant that we came across in Section~\ref{subsec:proof.overview}. Separately, it follows from \cite[Theorem~1.4]{linial2016phase} that $s(c)$ for $c > c_*$ represents the asymptotic density of the shadow of $Y(n, c/n),$ i.e., 
\[
    s(c) = \lim_{n \to \infty} \frac{|\Sh(Y(n, c/n))|}{\binom{n}{d + 1}}.
\]
%


We now state our main result. Define $K(\rho, \rho')$ to be the Kolmogorov metric between the two measures $\rho$ and $\rho'$, i.e., let
\[
    K(\rho,\rho') = \sup_{x \in \Real} \big|\rho(-\infty,x]  - \rho'(-\infty,x] \big|.
\]




\begin{theorem}[Main Result]\label{thm:bulk}
Let $q \geq 1$. Then the random measures $\mun$ and $\tmun$ converge in the Kolmogorov metric to $\mu$ in $\Lq$. Moreover, if $f: \Real \to \Real$ is a continuous function with $\mu|f| < \infty$ and
\begin{equation}
    \label{eqn:Suff.Cond.Lq}
    \lim_{b \to \infty} \sup_{n \geq 0} \normq{\mun |f| - \mun (|f| \wedge b) \big.} = 0,
\end{equation}
then both $\mun f$ and $\tmun f$ converge to $\mu f$ in $\Lq$.
\end{theorem}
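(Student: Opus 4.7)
The plan is to prove the statement for $\mup$ first, then deduce it for $\tmup$ from the $\Mp$–$\Dp$ duality, and finally promote CDF-level convergence to integral convergence via truncation. A crucial workhorse throughout is the Kruskal identity
\begin{equation*}
\binom{n-1}{d}\mup([0,x]) \;=\; \binom{n-1}{d} - \beta_{d-1}(\cLp|_{x/(np)}),
\end{equation*}
which follows from the fact that Kruskal's algorithm adds a $d$-face to the MSA exactly when doing so decreases $\beta_{d-1}$, combined with $\beta_{d-1}$ of the complete $(d-1)$-skeleton on $n$ vertices being $\binom{n-1}{d}$. Since $\cLp|_s \stackrel{d}{=} Y(n,ps)$ for $s<1$, this reduces concentration questions about $\mup$ to concentration of $\beta_{d-1}$ of Linial--Meshulam complexes.

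For the first moment, I would condition on the weight $w(\sigma_0)=r$ of a fixed $d$-face $\sigma_0$ and apply \cite[Lemma~32]{skraba2017randomly}: $\sigma_0\in\Mp$ iff $\sigma_0\notin\Sh(\cLp|_{r^{-}})$. By symmetry over faces, a change of variables $c=npr$, and $\binom{n}{d+1}/\binom{n-1}{d}=n/(d+1)$,
\begin{equation*}
\E\mup([0,x]) \;=\; \frac{1}{d+1}\int_0^x \bigl(1-s_n(c)\bigr)\,dc,
\end{equation*}
where $s_n(c):=\E|\Sh(Y(n,c/n))|/\binom{n}{d+1}$. By \cite[Theorem~1.4]{linial2016phase}, $s_n(c)\to s(c)$ pointwise, and bounded convergence yields $\E\mup([0,x])\to\mu([0,x])$. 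For concentration, the Kruskal identity gives $\Var\mup([0,x]) \leq \Var\beta_{d-1}(Y(n,x/n))/\binom{n-1}{d}^2$. For $d\geq 2$, McDiarmid's inequality (changing one $d$-face alters $\beta_{d-1}$ by at most one) gives $\Var\beta_{d-1}=O(\binom{n}{d+1})$, whence $\Var\mup([0,x])=O(n^{-(d-1)})\to 0$; for $d=1$, the classical $O(n)$ variance bound on the component count of $G(n,c/n)$ yields $\Var\mup([0,x])=O(1/n)$. Either way $\mup([0,x])\to\mu([0,x])$ in probability for each $x$. A standard sandwich argument on a finite $\varepsilon$-grid, exploiting monotonicity of both CDFs and continuity of $x\mapsto\mu([0,x])$ (which holds since $\mu$ has a bounded density), upgrades this to $K(\mup,\mu)\to 0$ in probability; since $K(\mup,\mu)\leq 1$, bounded convergence then promotes this to $L^q$. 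The conclusion for $\tmup$ is immediate: by \cite[Theorem~3]{skraba2017randomly} the multiset of $d$-MSA weights in $\cLp$ equals the multiset of $(d-1)$-th death times of $\{\cLp|_s\}$, so $\mup=\tmup$ as random measures.

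For the moreover statement, given $\varepsilon>0$ I would choose $b$ so that \eqref{eqn:Suff.Cond.Lq} gives $\normq{\mup|f|-\mup(|f|\wedge b)}<\varepsilon$ uniformly in $n$ and $\mu|f|-\mu(|f|\wedge b)<\varepsilon$. Since $f\wedge b$ is bounded and continuous, $\mup(f\wedge b)\to\mu(f\wedge b)$ in probability by Kolmogorov convergence and the Portmanteau theorem, and hence in $L^q$ by boundedness. Triangle inequality and $\varepsilon\downarrow 0$ give $\mup f\to\mu f$ in $L^q$; the same proof works for $\tmup f$. The main technical obstacle is the variance estimate in the $d=1$ regime, where the naive bounded-differences bound on $\beta_0$ gives only $O(1)$ fluctuations and must be replaced by a sharper $O(n)$ bound on the variance of the component count of $G(n,c/n)$; once this input is in place, the rest is a largely routine assembly of the shadow-density asymptotics from \cite{linial2016phase} with standard weak-convergence machinery.
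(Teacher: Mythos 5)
Your proposal is correct, but it reaches the key intermediate fact (the paper's Proposition~\ref{prop:conv on Intervals}) by a genuinely different route. The paper converts $\mup(c,\infty)$ into $\beta_{d-1}(\cLp|_{c/(pn)})\overset{d}{=}\beta_{d-1}(Y(n,c/n))$ exactly as you do, but then imports the $\Lq$ convergence of $\beta_{d-1}(Y(n,c/n))/n^d$ wholesale from Hino--Kanazawa (Lemma~\ref{lem:Beh d - 1 Betti Number}), at the cost of a separate calculus lemma (Lemma~\ref{lem:h is in fact int of 1 - shadow density}, matching derivatives via \cite[Claim~5.3]{linial2016phase}) to identify that limit $h(c)$ with the shadow-based $\mu(c,\infty)$. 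You instead compute the first moment directly from the shadow criterion \cite[Lemma~32]{skraba2017randomly}, which makes the identification of the limit with $\mu$ immediate and transparent, and you supply the concentration by hand via variance bounds, upgrading convergence in probability to $\Lq$ by boundedness of $K(\mup,\mu)$. This is self-contained modulo two inputs you should handle with care: (i) your first-moment identity needs a short exchangeability step relating $\Pr(\sigma\in\Sh(\cLp|_{r^-})\mid w(\sigma)=r)$ to $\E|\Sh(Y(n,pr))|/\binom{n}{d+1}$ (there is a harmless $1/(1-pr)$ correction from conditioning on $\sigma\notin Y$), and the passage $s_n(c)\to s(c)$ requires taking expectations in \cite[Theorem~1.4]{linial2016phase} via bounded convergence; (ii) your $d=1$ case leans on the classical $O(n)$ variance bound for the component count of $G(n,c/n)$, which is true but is a real external citation. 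In fact the case split is unnecessary: the Efron--Stein inequality with resampling of a single $d$-face gives $\Var\,\beta_{d-1}(Y(n,c/n))\leq\binom{n}{d+1}\cdot 2\tfrac{c}{n}(1-\tfrac{c}{n})=O(n^d)$, since the difference is bounded by $1$ and vanishes unless the resampled indicator flips; this yields $\Var\,\mup([0,x])=O(n^{-d})$ uniformly in $d\geq 1$. Finally, for sign-changing $f$ note that $f\wedge b$ need not be bounded below, so you should truncate as the paper does, splitting $f=f^+-f^-$ and observing that the truncation errors for $f^{\pm}$ are dominated by $\mup|f|-\mup(|f|\wedge b)$; the rest of your truncation argument coincides with the paper's.
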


\begin{corollary}
    \label{cor:Spl.Case}
    For the unbounded function $f(x) = x^\alpha,$  $\alpha > 0,$ the hypothesis of Theorem~\ref{thm:bulk} holds true for any $q \geq 1.$ That is, $\mu|f| < \infty$ and \eqref{eqn:Suff.Cond.Lq} holds. 
\end{corollary}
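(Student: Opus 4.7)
The plan is to establish separately that $\mu|f| < \infty$ and that the uniform integrability condition \eqref{eqn:Suff.Cond.Lq} holds for $f(x) = x^\alpha$.

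For the finiteness of $\mu|f|$, I would substitute $f(x) = x^\alpha$ into the density of $\mu$:
\[
    \mu|f| = \int_0^\infty \frac{x^\alpha (1 - s(x))}{d+1}\df{x}.
\]
The integral over $[0, c_*]$ is trivially finite. For the tail $x > c_*$, $t(x)$ satisfies $t(x) = e^{-x(1-t(x))^d}$ with $t(x) \to 0$ as $x \to \infty$, whence $t(x) = e^{-x(1+o(1))}$. Consequently, $1 - s(x) = 1 - (1-t(x))^{d+1} \leq (d+1)\,t(x) = O(e^{-x})$, and this decay kills any polynomial $x^\alpha$, so $\int_{c_*}^\infty x^\alpha (1-s(x))\df{x} < \infty$.

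For the uniform integrability, the key input is the distributional identity, valid for $y \in [0, np)$,
\[
    N_n(y) := \#\{\sigma \in \Mp : np\, w(\sigma) > y\} \stackrel{d}{=} \beta_{d-1}(Y(n, y/n)),
\]
which follows by combining the MSA/death-time equivalence of \cite[Theorem~3]{skraba2017randomly} with the identification $\cLp|_r \stackrel{d}{=} Y(n, pr)$ for $r < 1$ (both discussed in Section~\ref{subsec:proof.overview}); note the cancellation $p \cdot y/(np) = y/n$ that eliminates $p$. A layer-cake decomposition of $(x^\alpha - b)^+$ then yields
\[
    \mup|f| - \mup(|f| \wedge b) = \frac{\alpha}{\binom{n-1}{d}} \int_{b^{1/\alpha}}^{np} y^{\alpha-1}\, N_n(y) \df{y}.
\]
Applying Minkowski's integral inequality, together with $F_n(y) := N_n(y)/\binom{n-1}{d} \in [0, 1]$ (since $\beta_{d-1}$ of any subcomplex of the $n$-vertex complete $(d-1)$-skeleton is at most $\binom{n-1}{d}$) and the consequent bound $\normq{F_n(y)}^q \leq \E F_n(y)$, gives
\[
    \normq{\mup|f| - \mup(|f| \wedge b)} \leq \alpha \int_{b^{1/\alpha}}^{np} y^{\alpha-1}\, (\E F_n(y))^{1/q}\df{y}.
\]

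The final step, and the main obstacle, is to dominate $\E F_n(y)$ by an integrable function of $y$ uniformly in $n$. I would target $\E F_n(y) \leq C\, e^{-y}$ by joining: (i) the trivial bound $\E F_n(y) \leq 1$ for $y$ in a bounded range $[0,y_0]$, absorbed into the constant $C$; (ii) for $y$ above the connectivity threshold $d \log n$, the first-moment estimate $\binom{n}{d}(1 - y/n)^{n-d} \lesssim n^d e^{-y}$ for the count of isolated $(d-1)$-faces, combined with the concentration of the boundary rank $\textnormal{rank}(\partial_d)$ via McDiarmid's bounded-differences inequality (the rank is $1$-Lipschitz in each $d$-face indicator); and (iii) an interpolation across the critical window $y \asymp d\log n$. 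With such a dominator in hand, $(\E F_n(y))^{1/q} \leq C^{1/q} e^{-y/q}$, and the Gamma-type tail $\int_{b^{1/\alpha}}^\infty y^{\alpha-1}\, e^{-y/q}\df{y}$ vanishes as $b \to \infty$, yielding the required uniform limit. Observe also that the supremum need only be taken over large $n$, since for $n$ with $np \leq b^{1/\alpha}$ the integral is trivially zero, which simplifies the handling of small $n$.
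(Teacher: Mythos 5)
Your first part ($\mu|f|<\infty$) and the entire reduction in the second part --- the layer-cake identity expressing $\mup|f|-\mup(|f|\wedge b)$ as $\frac{\alpha}{\binom{n-1}{d}}\int_{b^{1/\alpha}}^{np}y^{\alpha-1}N_n(y)\df{y}$ via the MSA/death-time equivalence and $\cLp|_r\overset{d}{=}Y(n,pr)$, followed by Minkowski's integral inequality and the interpolation $\normq{F_n(y)}^q\leq \E F_n(y)$ for $F_n\in[0,1]$ --- coincide with the paper's argument almost line for line. The proof stands or falls on the last step: a bound on $\E\big[\beta_{d-1}(Y(n,y/n))\big]/n^d$ that decays in $y$ \emph{uniformly in $n$} over the whole range $b^{1/\alpha}\leq y\leq np$. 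The paper obtains this by citing inequality (4.14) of Hino--Kanazawa, which gives $\E[\beta_{d-1}(Y(n,s/n))/n^d]\leq 1\wedge C/s^{\ell}$ for some $\ell>1\vee q\alpha$; note that only a polynomial tail is needed and only a polynomial tail is claimed there.

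This is where your proposal has a genuine gap. Your plan (i)--(iii) does not produce the dominator. In regime (ii), the first-moment estimate $\binom{n}{d}(1-y/n)^{n-d}\lesssim n^d e^{-y}$ controls the number of \emph{isolated} $(d-1)$-faces, not $\beta_{d-1}$; the inequality $\beta_{d-1}\lesssim$ (number of isolated faces) is precisely the hard content of the Linial--Meshulam/Meshulam--Wallach cocycle-counting theorems, and McDiarmid's inequality cannot substitute for it --- the rank of $\partial_d$ is indeed $1$-Lipschitz in each face indicator, so its fluctuations concentrate, but concentration around $\E[\textnormal{rank}(\partial_d)]$ tells you nothing about the value of that expectation, which is exactly what you need to bound. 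Worse, the regime you dismiss as ``interpolation across the critical window'' is where the real difficulty lives: for $b^{1/\alpha}\leq y\ll d\log n$ the trivial bound $\E F_n(y)\leq 1$ gives an integral contribution of order $(d\log n)^{\alpha}$, which blows up with $n$, so some quantitative decay of $\E[\beta_{d-1}(Y(n,y/n))]/n^d$ in $y$, uniform in $n$, is indispensable throughout $1\ll y\ll d\log n$ and is not supplied by anything in your outline. (Your target $Ce^{-y}$ is also slightly too strong --- the limit $h(y)$ behaves like $ye^{-y}$ --- though that is cosmetic; the substantive issue is that the uniform intermediate-regime bound is asserted rather than proved. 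To close the gap you should either invoke \cite[(4.14)]{hino2019asymptotic} as the paper does, or reproduce a cocycle-counting/first-moment argument over all nontrivial cocycles, which is a nontrivial standalone estimate.)
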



\begin{remark}
\label{r:Lnp.extension.Ynp}
In Section~\ref{sec:extensions}, we discuss three extensions of the above result. The first two relate to the cases where the $d$-face weights have a generic distribution and an additional noisy perturbation. The third one is about the weighted $Y(n, p)$ complex, wherein not all the potential $d$-faces may be present. 
\end{remark}

\begin{remark}
Theorem~\ref{thm:bulk} readily applies when $f$ is bounded since hypothesis \eqref{eqn:Suff.Cond.Lq} is then trivially satisfied. Hence, an immediate consequence of our result is that $\mun$ converges to $\mu$ weakly in $\Lq$. However, our result also applies to unbounded functions such as $f(x) = x^\alpha$ for $\alpha > 0$. This is an important example since Frieze's result \cite{frieze1985value} concerning the sum of weights in the MST and its recent generalization to higher  dimensions by Hino and Kanazawa (\cite[Theorem~4.11]{hino2019asymptotic}) then become special consequences of Theorem~\ref{thm:bulk}. Moreover, the limiting constant $I_{d - 1}^{(\alpha)}$ in \cite{hino2019asymptotic} can now be interpreted as the  $\alpha$-th moment of the measure $\mu.$
\end{remark}

\begin{remark}\label{rmk:inprob}
A variant of the second part of Theorem~\ref{thm:bulk}, obtained by replacing condition \eqref{eqn:Suff.Cond.Lq} by 
\begin{equation}\label{eqn:bapprox.inprob}
\lim_{b \to \infty} \sup_{n \geq 0} \Pr\{\mun |f| - \mun (|f| \wedge b) \geq \epsilon\} = 0    
\end{equation} 
and convergence in $\Lq$ by convergence in probability, also holds. Notice that, while the condition on $f$ that we require here is weaker, the conclusion is also weaker. The details are given in Proposition \ref{prop:inprob}. 
\end{remark}

\section{Proofs}\label{sec:bulk}

We derive here all the results stated in Section~\ref{sec:resultsterm} and, also, Proposition~\ref{prop:inprob} which concretizes the claim in  Remark~\ref{rmk:inprob}. We emphasize once again that both Theorem~\ref{thm:bulk} and Proposition~\ref{prop:inprob} hold for a class of unbounded functions, examples of which are given in Corollary~\ref{cor:Spl.Case}.

We begin with two propositions.
\begin{proposition}
\label{prop:mu.well.defined}
$\mu$ is a probability measure with the density function given in \eqref{d:limiting.bulk.measure}.
\end{proposition}

\begin{proposition}
\label{prop:conv on Intervals}
Let $c \geq 0$ and $q \geq 1.$ Then, $\lim_{n \to \infty} \normq{\mun(c, \infty) - \mu(c, \infty)} = 0.$
\end{proposition}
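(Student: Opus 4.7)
My plan is to translate $\mup(c, \infty)$ into a rescaled reduced Betti number of a Linial-Meshulam complex and then invoke the shadow-density results of \cite{linial2016phase}. The bridge is the equivalence between $d$-MSA weights of $\cLp$ and the $(d-1)$-persistence death times of the filtration $\{\cLp|_s\}$ recorded in \cite[Theorem~3]{skraba2017randomly}: the number of death times strictly exceeding $r$ equals $\beta_{d-1}(\cLp|_r) - \beta_{d-1}(\cLp|_1)$, which reduces to $\beta_{d-1}(\cLp|_r)$ since $\cLp|_1 = \cK_n$ has $\beta_{d-1} = 0$. Hence
\[
    \left|\{\sigma \in \Mp : w(\sigma) > r\}\right| = \beta_{d-1}(\cLp|_r), \qquad r \in [0, 1).
\]
Moreover, each potential $d$-face has weight at most $r$ independently with probability $rp$, so $\cLp|_r \stackrel{d}{=} Y(n, rp)$. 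Setting $r = c/(np)$, which lies in $[0, 1)$ for all sufficiently large $n$, yields the distributional identity
\[
    \mup(c, \infty) \stackrel{d}{=} \frac{\beta_{d-1}(Y(n, c/n))}{\binom{n-1}{d}}.
\]

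To identify the limit, I would use the rank-nullity identity $\beta_{d-1}(Y) = \binom{n-1}{d} - (f_d(Y) - \beta_d(Y))$ and view $\mathrm{rank}(\partial_d)$ as being accumulated sequentially: processing $d$-faces in order of weight, a face at weight $x/n$ contributes to the rank precisely when it lies outside the shadow of the already-present faces. By \cite[Theorem~1.4]{linial2016phase}, this non-shadow probability converges to $1 - s(x)$, so the normalized rank converges to $\tfrac{1}{d+1}\int_0^c (1 - s(x))\,dx$, whence
\[
    \frac{\beta_{d-1}(Y(n, c/n))}{\binom{n-1}{d}} \longrightarrow 1 - \frac{1}{d+1}\int_0^c (1 - s(x))\,dx = \mu(c, \infty).
\]

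For the $L^q$ upgrade, the key observation is that both $\mup(c, \infty)$ and $\mu(c, \infty)$ lie in $[0, 1]$ as masses of probability measures, so $|\mup(c, \infty) - \mu(c, \infty)| \leq 1$ is uniformly bounded; hence, once convergence in probability is in hand, $L^q$-convergence for every $q \geq 1$ follows immediately from the bounded convergence theorem. Convergence in probability itself reduces to concentration of $\beta_{d-1}(Y(n, c/n))/\binom{n-1}{d}$ around its mean, obtainable either from the analysis of \cite{linial2016phase} or from a direct Efron-Stein variance bound treating $\beta_{d-1}$ as a function of the i.i.d.\ face indicators. The principal obstacle is making the sequential rank-accumulation step rigorous: one must exchange the incremental expectations with the integrated shadow density and control the fluctuations uniformly across the filtration, whereas the shadow-density theorem of \cite{linial2016phase} is stated only at a fixed parameter.
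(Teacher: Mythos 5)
Your opening reduction is exactly the paper's: via \cite[Theorem~3]{skraba2017randomly} and the vanishing of $\beta_{d-1}(\cK_n)$ you get $\binom{n-1}{d}\mup(c,\infty)=\beta_{d-1}(\cLp|_{c/(np)})\overset{d}{=}\beta_{d-1}(Y(n,c/n))$, and this part is correct and complete. Your $\Lq$ upgrade is also fine and in fact slightly slicker than what the paper needs: since $\mup$ and $\mu$ are both probability measures, $|\mup(c,\infty)-\mu(c,\infty)|\le 1$, so convergence in probability plus bounded convergence gives $\Lq$ convergence for every $q$. The divergence — and the genuine gap — is in how you identify and establish the limit of $\beta_{d-1}(Y(n,c/n))/\binom{n-1}{d}$. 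Your sequential rank-accumulation picture (a face entering at scaled weight $x$ increases $\mathrm{rank}(\partial_d)$ iff it avoids the current shadow, whose density tends to $s(x)$) is the correct heuristic — it is essentially the paper's own proof outline and explains the form of $\df{\mu(x)}=\frac{1-s(x)}{d+1}\df{x}$ — but as you acknowledge, turning it into a proof requires exchanging the limit with the integral over the filtration, uniform control of the shadow density across all parameters up to $c/n$ (whereas \cite[Theorem~1.4]{linial2016phase} is a fixed-parameter statement in probability), and a concentration argument. That is not a routine technicality; it is essentially a re-proof of \cite[Theorem~4.11]{hino2019asymptotic}, and your proposal does not carry it out.

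The paper avoids this entirely by citation: its Lemma~\ref{lem:Beh d - 1 Betti Number} imports from the proof of \cite[Theorem~4.11]{hino2019asymptotic} the $\Lq$ statement $\normq{\beta_{d-1}(Y(n,c/n))/n^d - h(c)/d!}\to 0$ for an explicit function $h(c)=g(c)+1-c/(d+1)$ built from $t(c)$, and then a separate calculus lemma (Lemma~\ref{lem:h is in fact int of 1 - shadow density}) identifies $h(c)=\mu(c,\infty)$ by checking that both functions vanish at infinity and have the same derivative $-\frac{1}{d+1}[1-(1-t(c))^{d+1}]$ off $c_*$, using \cite[Claim~5.3]{linial2016phase}. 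If you are willing to cite the Hino--Kanazawa limit theorem, your argument closes immediately (and your boundedness trick even lets you get away with only their in-probability version); if not, you need to supply the missing uniformity and concentration analysis, for which your suggested Efron--Stein bound handles the variance but not the identification of the mean along the whole filtration.
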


The latter proposition concerns a particular case of Theorem~\ref{thm:bulk}, where $f = \indc(c, \infty)$ for some $c \geq 0.$ Notice that Proposition~\ref{prop:mu.well.defined} and the boundedness of $f$ imply that the condition in \eqref{eqn:Suff.Cond.Lq} and $\mu |f| < \infty$ trivially hold.  
The proofs of these two results are postponed to the end of this section.




\begin{proof}[Proof of Theorem~\ref{thm:bulk}]
Due to  \cite[Theorem~3]{skraba2017randomly}, it suffices to derive the results only for  $\mun.$ We first show that $K(\mun,\mu) \to 0$ in $\Lq$. Let $G_{n}(x) = \mun(-\infty,x]$ and $G(x) = \mu(-\infty,x]$.
Fix $m$ and pick $c_0, c_1, \ldots, c_m$ such that $c_0 = 0$, $c_m = \infty,$ and $G(c_{i + 1}) - G(c_i) = 1/m$, which can be done due to Proposition~\ref{prop:mu.well.defined}. 

For $x \in [c_i, c_{i + 1}]$, we get
\begin{align*}
    |G_{n}(x) - G(x)| \leq {} &\max\big\{ G(c_{i + 1}) - G_{n}(c_i),\; G_{n}(c_{i + 1}) - G(c_i)\big\} \\
    \leq {} & \max\left\{G(c_i) + \frac{1}{m} - G_{n}(c_i),\; G_{n}(c_{i + 1}) - G(c_{i + 1}) + \frac{1}{m} \right\} \\
    \leq {} & \sum_{j=1}^m|G_{n}(c_j) - G(c_j)| + \frac{1}{m}.
\end{align*}
Since the bound is independent of $x,$ it then follows that
\begin{equation}
\label{e:Kolmogorov.Relation}
K(\mun, \mu) = \sup_{x\in\Real} |G_{n}(x) - G(x)| \leq \sum_{j=1}^m|G_{n}(c_j) - G(c_j)| + \frac{1}{m}.
\end{equation}
Therefore, 
\begin{align*}
\normq{K(\mun,\mu)} 
%
\leq {} & \sum_{j=1}^m \normq{G_{n}(c_j) - G(c_j)} + \frac{1}{m}.
\end{align*}
Clearly, $G(x)=1 - \mu(x,\infty)$ and $G_{n}(x) = 1 - \mun(x,\infty).$ Thus, by Proposition \ref{prop:conv on Intervals},  $\normq{G_{n}(c_j) - G(c_j)} \to 0$ for all $j.$ Now, since $m$ is arbitrary, $\normq{K(\mun,\mu)} \to 0,$ as desired.\medskip

To extend the convergence to unbounded functions satisfying the given conditions, we begin by assuming that $f$ is non-negative. Let $\epsilon > 0$ be arbitrary. For any $b > 0$, triangle inequality shows
\[
    \normq{\mun f - \mu f} \leq \normq{\mun f - \mun (f \wedge b)} + \normq{\mun (f \wedge b) - \mu (f \wedge b)} + \normq{\mu f - \mu (f \wedge b)}.
\]
Now pick a large enough $b > 0$ so that
\[
    \sup_{n \geq 0} \big\|\mun f - \mun (f \wedge b)\big\|_q \leq \epsilon \quad \text{ and } \quad \normq{\mu f - \mu (f \wedge b)} = \mu f - \mu (f \wedge b) \leq \epsilon.
\]
Such a $b$ indeed exists due to \eqref{eqn:Suff.Cond.Lq} and the fact that $\mu f < \infty.$ Therefore,
\[
 \normq{\mun f - \mu f} \leq 2\epsilon + \normq{\mun (f \wedge b) - \mu (f \wedge b)}.
\]
However, $f \wedge b$ is a bounded continuous function. Also, the Kolmogorov metric dominates the L\'evy metric which metrizes weak convergence. Consequently, $\limsup_{n \to \infty} \normq{\mun f - \mu f} \leq 2 \epsilon. $ Since $\epsilon > 0$ is arbitrary, it follows that $\mun f$ converges $\mu f$ in $\Lq$.

It remains to deal with the case of general $f$. Clearly, $f = f^+ - f^-$, where $f^+ = \max\{f, 0\}$ and $f^- = \max\{-f, 0\}$. Furthermore, both $(f^+ - b) \; \indc[f^+ \geq b]$ and $(f^- - b) \; \indc[f^- \geq b]$  are bounded from above by $  (|f| - b)\; \indc[|f| \geq b]$, whence it follows that $f^+ - (f^+ \wedge b)$ and $f^- - (f^- \wedge b))$ are bounded from above by $|f| - |f| \wedge b$. Therefore, by repeating the above arguments for both $f^+$ and $f^-$, individually, it is easy to see that the result holds for the case of general $f$ as well.
\end{proof}

We next discuss the claim in Remark \ref{rmk:inprob}. Formally, it can be stated as follows. 

\begin{proposition}\label{prop:inprob}
$K(\mun, \mu) \to 0$ in probability. Moreover, if $f: \Real \to \Real$ is a continuous function such that \eqref{eqn:bapprox.inprob} holds for every $\epsilon > 0,$ then $\mun f$ converges to $\mu f$ in probability.   
\end{proposition}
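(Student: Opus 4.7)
The plan is to mirror the proof of Theorem~\ref{thm:bulk} step by step, consistently weakening the $\Lq$ estimates to convergence-in-probability estimates. The only nontrivial input, Proposition~\ref{prop:conv on Intervals}, already provides $\Lq$ convergence (hence in-probability convergence) of $\mup(c, \infty)$ to $\mu(c, \infty)$ at each fixed $c \geq 0$; everything else is soft measure-theoretic bookkeeping.

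For the Kolmogorov-metric statement, I would start from the deterministic telescoping bound~\eqref{e:Kolmogorov.Relation} already established in the proof of Theorem~\ref{thm:bulk}: for each $m$, choosing $c_1, \dots, c_m$ that equipartition $G$ gives
\[
K(\mup, \mu) \;\leq\; \sum_{j=1}^m \bigl|F_{n,p}(c_j) - G(c_j)\bigr| + \frac{1}{m}.
\]
Each of the $m$ summands on the right converges to zero in probability by Proposition~\ref{prop:conv on Intervals}, and a finite sum of sequences going to zero in probability goes to zero in probability. Consequently $\Pr\bigl(K(\mup, \mu) > 2/m\bigr) \to 0$ for each $m$, and since $m$ is arbitrary, $K(\mup, \mu) \to 0$ in probability.

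For the statement about $\mup f$, I would first reduce to the case $f \geq 0$ via the splitting $f = f^+ - f^-$ and the pointwise bound $f^{\pm} - (f^{\pm} \wedge b) \leq |f| - (|f| \wedge b)$, which lets the hypothesis~\eqref{eqn:bapprox.inprob} on $|f|$ transfer to both $f^+$ and $f^-$, exactly as in the proof of Theorem~\ref{thm:bulk}. Assuming $f \geq 0$ and fixing $\epsilon > 0$, triangulate
\[
|\mup f - \mu f| \;\leq\; \bigl|\mup f - \mup (f \wedge b)\bigr| + \bigl|\mup (f \wedge b) - \mu (f \wedge b)\bigr| + \bigl|\mu f - \mu (f \wedge b)\bigr|,
\]
and choose $b$ large enough that $\sup_n \Pr(\mup f - \mup(f \wedge b) \geq \epsilon) \leq \epsilon$ (by \eqref{eqn:bapprox.inprob}) and $\mu f - \mu(f \wedge b) \leq \epsilon$ (by $\mu f < \infty$). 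It then remains to control the middle term for the bounded continuous function $f \wedge b$. This follows from the Kolmogorov convergence in probability just proved: given any subsequence, extract a further subsequence along which $K(\mup, \mu) \to 0$ almost surely; this implies weak convergence of $\mup$ to $\mu$ almost surely along that sub-subsequence, whence $\mup(f \wedge b) \to \mu(f \wedge b)$ almost surely by the bounded continuous test-function property. The subsequence principle upgrades this to convergence in probability. A union bound then yields $\Pr(|\mup f - \mu f| \geq 3\epsilon) \to 0$, and since $\epsilon$ is arbitrary, $\mup f \to \mu f$ in probability.

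No step is genuinely difficult; the only minor subtlety is the last one, namely passing from in-probability Kolmogorov convergence to in-probability convergence of integrals of bounded continuous test functions, which I handle by the standard subsequence characterization of convergence in probability. Every other ingredient is a direct transcription of the $\Lq$ proof of Theorem~\ref{thm:bulk}, with the $\Lq$ norm replaced by $\Pr(\,\cdot\, \geq \epsilon)$ throughout.
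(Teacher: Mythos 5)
Your proof is correct and follows essentially the same route as the paper's: the telescoping bound \eqref{e:Kolmogorov.Relation} combined with Proposition~\ref{prop:conv on Intervals} for the Kolmogorov-metric part, then truncation at level $b$ plus a union bound over the three error events for unbounded $f$. The only cosmetic difference is that you pass from in-probability Kolmogorov convergence to in-probability convergence of bounded continuous test functions via the subsequence principle, whereas the paper invokes the domination of the L\'evy metric by the Kolmogorov metric; both are standard, and your version arguably spells out a step the paper leaves implicit.
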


\begin{proof}
From  Proposition~\ref{prop:conv on Intervals} and Markov's inequality, we have that $\mun (c, \infty)$ converges to $\mu(c, \infty)$ in probability for any $c \geq 0.$ This along with \eqref{e:Kolmogorov.Relation} then shows that $K(\mun, \mu) \to 0$ in probability, as desired. The fact that the Kolmogorov metric dominates the L\'evy metric then immediately shows that $\mun f$ converges to $\mu f$ in probability whenever $f: \Real \to \Real$ is a bounded continuous function. 

We now discuss the case of unbounded functions. Again, as in the proof of Theorem~\ref{thm:bulk}, it suffices to derive the result assuming $f$ to be  non-negative. Let $\epsilon, \eta > 0$. Now pick a $b > 0$ so that
\[
    \sup_{n \geq 0} \Pr\{|\mun f - \mu(f \wedge b)| > \epsilon\} \leq \eta \quad \text{ and } \quad |\mu f - \mu (f \wedge b)| \leq \epsilon.
\]
This can be done on account of \eqref{eqn:bapprox.inprob} and the fact that $\mu f < \infty.$ We then have
\[
    \{|\mun f - \mu f| > 3\epsilon\} \subseteq \{|\mun f - \mun (f \wedge b)| > \epsilon\} \cup \{|\mun (f \wedge b) - \mu (f \wedge b)| > \epsilon\},
\]
whence it follows that
\[
    \Pr\{|\mun f - \mu f| > 3\epsilon\} \leq \eta + \Pr\{|\mun (f \wedge b) - \mu (f \wedge b)| > \epsilon\}.
\]
However, $f \wedge b$ is bounded and continuous and, hence, $\mun (f \wedge b)$ converges to $\mu (f \wedge b)$ in probability as shown above. Consequently, we have that  $\limsup_{n \to \infty} \Pr\{|\mun f - \mu f| > 3\epsilon\} \leq \eta.$ Now, since $\epsilon, \eta$ are arbitrary, the desired result is easy to see. \end{proof}

We now exploit a recent bound on Betti numbers  \cite{hino2019asymptotic} to prove Corollary~\ref{cor:Spl.Case} and thereby show that unbounded functions such as polynomials indeed satisfy the hypothesis of Theorem~\ref{thm:bulk}. Since \eqref{eqn:bapprox.inprob} is implied by \eqref{eqn:Suff.Cond.Lq}, such functions also satisfy the assumptions of Proposition~\ref{prop:inprob}. A technical result is also needed for deriving Corollary~\ref{cor:Spl.Case}. We state it here, but prove it afterwards.

\begin{lemma}
\label{lem:c tc Limiting behavior}
$t(c)$ is continuous over $(c_*, \infty)$ and  $\lim_{c \to c_*^+}t(c) = t_*.$ Moreover, $t(c) = O(e^{-c/2^d})$ as $c \to \infty.$
\end{lemma}

\begin{proof}[Proof of Corollary~\ref{cor:Spl.Case}] 
For any $\alpha > 0,$ Lemma~\ref{lem:c tc Limiting behavior}, along with  the fact that $0 \leq t(x) \leq 1$ for
$x \geq c_*,$ shows that $x^\alpha [1 - (1 - t(x))^{d + 1}] = O(x^\alpha t(x)) = O(x^\alpha e^{-x/2^d})$ as $x \to \infty.$ It is then straightforward to see from \eqref{d:limiting.bulk.measure} that $\mu |f| < \infty$ for $f(x) = x^\alpha,$ as desired. 

We next show that $f(x) = x^\alpha$ satisfies condition \eqref{eqn:Suff.Cond.Lq} as well. Recall that $\beta_{d - 1}(\cK)$ is the $(d - 1)$-th Betti number of the simplicial complex $\cK.$ Since the set of $d$-MSA weights in a weighted\footnote{This result requires that the weights be monotone, i.e., $w(\sigma) \leq w(\sigma')$ whenever $\sigma \subseteq \sigma'.$} complex equals the set of death times in the associated $(d - 1)$-th persistence diagram \cite[Theorem~3]{skraba2017randomly}, it follows by arguing as in the proof of \cite[Proposition~4.9]{hino2019asymptotic} that
\begin{align*}
\mun f - \mun (f \wedge b)
    &= \frac{n^\alpha }{\binom{n-1}{d}} \sum_{\sigma \in \Mn} \left[ w(\sigma)^\alpha -  w(\sigma)^\alpha \wedge \frac{b}{n^\alpha }\right] \\
    &= \frac{n^{\alpha} \alpha}{\binom{n-1}{d}} \int_{b^{1/\alpha}/n}^{1}\beta_{d - 1}(\cLn(s)) s^{\alpha - 1} \df{s}.
\end{align*}
Now substituting $r = s n$ shows that
\begin{align*}
    \mun f - \mun (f \wedge b) 
    &= \frac{\alpha}{\binom{n-1}{d}} \int_{b^{1/\alpha}}^{n}  r^{\alpha - 1}\beta_{d - 1}(\cLn(r/n)) \df{r}\\
    &= \frac{\alpha n^d}{\binom{n-1}{d}} \int_{b^{1/\alpha}}^{n}  r^{\alpha - 1} \frac{\beta_{d - 1}(\cLn(r/n))}{n^d} \df{r}.
\end{align*}
By using Minkowski's inequality applied to integrals, we then get
\[
    \normq{\mun f - \mun (f \wedge b)} \leq  \frac{\alpha n^d}{\binom{n-1}{d}} \int_{b^{1/\alpha}}^{n} r^{\alpha - 1} \normq{\dfrac{\beta_{d - 1}(\cLn(r/n))}{n^{d}}} \df{r}.
\]
Finally, since $\cLn(s)$ has the same distribution as that of $Y(n, s)$ for $s \in [0, 1],$ it follows that
\begin{equation}
\label{eq:minkowski}
    \normq{\mun f - \mun (f \wedge b)} \leq  \frac{\alpha n^d}{\binom{n-1}{d}} \int_{b^{1/\alpha}}^{n} r^{\alpha - 1} \normq{\dfrac{\beta_{d - 1}(Y(n, r/n))}{n^{d}}} \df{r}.
\end{equation}

Now, $0 \leq \beta_{d - 1}(Y(n, r/n)) \leq \binom{n}{d},$ whence
\[
0\leq \dfrac{\beta_{d - 1}(Y(n, r/n))}{n^{d}} \leq \frac{1}{d!}.
\]
Moreover, (4.14) from \cite{hino2019asymptotic} shows that there is a $\ell > 1 \vee q\alpha$ such that
\[
    \E\left[\frac{\beta_{d - 1}(Y(n, r/n))}{n^{d}}\right] \leq 1 \wedge \dfrac{C}{r^{\ell}}.
\]
By writing $q = q - 1 + 1$ and then using the above two inequalities, we get
\[
    \normq{\dfrac{\beta_{d - 1}(Y(n, r/n))}{n^{d}} } \leq \left(\frac{1}{d!}\right)^{(q-1)/q}\left(1 \wedge \dfrac{C^{1/q}}{r^{\ell/q}} \right)
\]
for all $r \in [0, n]$. Now, applying this bound in \eqref{eq:minkowski}, it follows that there exists a constant $K \geq 0$ such that  
\[
    \normq{\mun f - \mun(f \wedge b)} \leq \frac{K}{\ell/q - \alpha} b^{-(\ell/q - \alpha)/\alpha}
\]
for all sufficiently large $b$ (so that $C^{1/q}/r^{\ell/q} \leq 1$ for all $r \geq b^{1/\alpha}$). This then shows that the condition in \eqref{eqn:Suff.Cond.Lq} holds, as desired.
\end{proof}

It remains to prove Lemma~\ref{lem:c tc Limiting behavior}, and Propositions~\ref{prop:mu.well.defined} and \ref{prop:conv on Intervals}. 
%

\begin{proof}[Proof of Lemma~\ref{lem:c tc Limiting behavior}]
In \cite[Appendix B]{linial2016phase}, it is shown that $\psi(t)$ (defined above \eqref{d:c_*}) is continuous and monotonically decreasing for $0 < t < t(c_*).$ The monotonicity implies that i.) $t(c) = \psi^{-1}(c)$ is continuous over $(c_*, \infty),$ ii.) $t_* = t(c_*) = \lim_{c \to c_*^+} t(c),$ and iii.)  $\lim_{c \to \infty} t(c) = 0.$ The latter fact in turn shows $1 - t(c) \geq 1/2$ (say) whenever $c$ is large. This, combined with  \eqref{eqn:c tc Alt Relation}, implies $t(c) = O( e^{-c/2^d}),$ as desired.
\end{proof}

To prove Propositions~\ref{prop:mu.well.defined} and~\ref{prop:conv on Intervals}, we need two additional technical results. Let
\begin{equation}
\label{d:func.g}
    g(c) =
    \begin{cases}
        0,  & 0 \leq c \leq c_*, \\
        \displaystyle ct(c)\big(1 - t(c)\big)^d + \frac{c}{d + 1}\big(1 - t(c)\big)^{d + 1} - \big(1 - t(c)\big), & c > c_*,
    \end{cases}
\end{equation}
and $h(c) = g(c) + 1 - c/(d + 1).$

\begin{lemma}
\label{lem:Beh d - 1 Betti Number}
For $c \geq 0,$
\[
    \lim_{n \to \infty} \normq{\frac{\beta_{d - 1}(Y(n, c/n))}{n^d} \indc_{[0, n]}(c)  - \frac{h(c)}{d!}} = 0.
\]
\end{lemma}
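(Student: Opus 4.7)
The plan is to reduce the lemma to a convergence statement for $\beta_d$ via the Euler characteristic identity, and then to pin down the $\beta_d$ limit using the shadow density asymptotics of Linial--Peled \cite{linial2016phase}. Since $Y(n, c/n)$ contains the complete $(d-1)$-skeleton, its reduced Betti numbers $\beta_0, \ldots, \beta_{d-2}$ all vanish, so the Euler relation combined with the identity $\sum_{i=0}^{d-1}(-1)^i \binom{n}{i+1} = 1 + (-1)^{d-1}\binom{n-1}{d}$ yields the deterministic identity
\[
\beta_{d-1}(Y(n, c/n)) = \binom{n-1}{d} - |\cF^d(Y(n, c/n))| + \beta_d(Y(n, c/n)).
\]
Dividing by $n^d$ and handling each term, the first tends deterministically to $1/d!$, while the second involves $|\cF^d(Y(n, c/n))| \sim \Bin(\binom{n}{d+1}, c/n)$, which by standard concentration of binomials converges in $L^q$ to $c/(d+1)!$. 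Since $h(c)/d! = 1/d! - c/(d+1)! + g(c)/d!$, the task reduces to showing $\beta_d(Y(n, c/n))/n^d \to g(c)/d!$ in $L^q$.

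For this key step I would couple $Y(n, p)$ to its weighted realization where every potential $d$-face carries an i.i.d.\ uniform $[0,1]$ weight and $\sigma$ is included iff $w(\sigma) \leq p$. Each newly arriving face either fills a $(d-1)$-hole or creates a new $d$-cycle, the latter happening precisely when the face lies in the shadow of the subcomplex of strictly earlier faces. Hence $\beta_d(Y(n, p))$ counts exactly the $\sigma$ with $w(\sigma) \leq p$ such that $\sigma \in \Sh(Y(n, w(\sigma)^-))$. Taking expectations, conditioning on $w(\sigma) = r$, and summing over $\sigma$ gives a Margulis--Russo-style identity
\[
\E\, \beta_d(Y(n, p)) = \int_0^p \frac{\E\, |\Sh(Y(n, r))|}{1 - r}\df{r}.
\]
Substituting $p = c/n$ and $r = c'/n$, dividing by $n^d$, and using $\binom{n}{d+1}/n^{d+1} \to 1/(d+1)!$ together with the shadow density theorem $|\Sh(Y(n, c'/n))|/\binom{n}{d+1} \to s(c')$ of \cite[Thm~1.4]{linial2016phase}, bounded convergence (the integrand is uniformly dominated by $1/(d+1)!$) gives $\E\, \beta_d(Y(n, c/n))/n^d \to \frac{1}{(d+1)!}\int_0^c s(c') \df{c'}$. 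Differentiating the formula \eqref{d:func.g} for $g$ and using the defining relation $t(c) = e^{-c(1-t(c))^d}$ shows $g'(c) = s(c)/(d+1)$ on $(c_*, \infty)$, while $g(c_*) = 0$ follows from the defining equation of $t_*$; hence the right-hand side equals $g(c)/d!$.

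To upgrade the convergence of expectations to $L^q$, I would combine the above with a variance estimate. Since $\beta_d$ changes by at most one under a single-face edit, the Efron--Stein inequality yields $\Var(\beta_d) = O(n^d)$, whence $\Var(\beta_d/n^d) \to 0$, and Chebyshev gives convergence in probability. Finally, the deterministic bound $\beta_{d-1}(Y(n, c/n))/n^d \leq \binom{n-1}{d}/n^d \leq 1/d!$ renders the normalised sequence uniformly bounded, so convergence in probability upgrades automatically to convergence in $L^q$ for every $q \geq 1$.

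The main obstacle is the expectation computation: setting up the Margulis--Russo representation correctly, justifying the interchange of limit and integral via the Linial--Peled shadow theorem, and verifying the somewhat opaque identity $(d+1)\, g(c) = \int_0^c s(c')\df{c'}$ from the explicit formula for $g$. Once that identification is in place, the remainder is routine concentration and uniform integrability.
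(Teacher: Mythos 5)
Your proposal is correct in substance but takes a genuinely different route from the paper: the paper's proof of this lemma is a one-line citation to the proof of Theorem~4.11 in Hino--Kanazawa, whereas you give a self-contained argument. Your route---Euler characteristic to reduce $\beta_{d-1}$ to $\beta_d$, the exact counting identity $\beta_d(Y|_p) = \#\{\sigma : w(\sigma)\leq p,\ \sigma\in\Sh(Y|_{w(\sigma)^-})\}$, the resulting integral formula $\E\,\beta_d(Y(n,p)) = \int_0^p \E|\Sh(Y(n,r))|\,(1-r)^{-1}\df{r}$, the Linial--Peled shadow density theorem plus dominated convergence, and the verification $(d+1)g'(c) = s(c)$ with $g(c_*^+)=0$ (which indeed reduces to the defining equation of $t_*$)---all checks out, and it has the virtue of making explicit the shadow connection that the paper only describes heuristically in its proof outline. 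What the citation buys the authors is brevity and the avoidance of re-deriving concentration; what your argument buys is transparency and independence from the $\Lq$-moment machinery of \cite{hino2019asymptotic}. Note also that the uniform-boundedness upgrade to $\Lq$ must be applied to $\beta_{d-1}/n^d \leq 1/d!$ (as you do), not to $\beta_d/n^d$, which is not uniformly bounded by a constant.

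One step needs repair as written: the Efron--Stein bound. Bounded differences alone ("$\beta_d$ changes by at most one under a single-face edit") gives only $\Var(\beta_d) \leq \tfrac12\binom{n}{d+1} = O(n^{d+1})$, which after dividing by $n^{2d}$ is $O(n^{1-d})$ and does \emph{not} vanish when $d=1$. To get your claimed $O(n^d)$ you must additionally use that a resampled $\Ber(c/n)$ coordinate actually changes the presence of its face only with probability $2\tfrac{c}{n}(1-\tfrac{c}{n})$, so that $\E[(\beta_d - \beta_d^{(i)})^2] \leq 2c/n$ per face, whence $\Var(\beta_d) \leq \binom{n}{d+1}\cdot c/n = O(n^d)$. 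With that refinement the Chebyshev step goes through for all $d\geq 1$ and the rest of your argument is sound.
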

\begin{proof}
This is shown in the proof of \cite[Theorem~4.11]{hino2019asymptotic}.
\end{proof}

\begin{lemma}
\label{lem:h is in fact int of 1 - shadow density}
$\mu$ is a well-defined measure. Moreover, for any $c \geq 0$, we have $\mu(c,\infty) = h(c)$.
\end{lemma}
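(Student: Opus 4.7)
The plan is to verify $\mu(c, \infty) = h(c)$ by showing that both sides have the same derivative in $c$ and the same limit as $c \to \infty$. From \eqref{d:limiting.bulk.measure},
\[
    \mu(c, \infty) = \int_c^\infty \frac{1 - s(x)}{d+1}\, dx,
\]
so $\tfrac{d}{dc}\mu(c,\infty) = -(1-s(c))/(d+1)$ wherever the integrand is continuous. Lemma~\ref{lem:c tc Limiting Behaviour} gives $t(x) = O(e^{-x/2^d})$, hence $1 - s(x) = 1 - (1 - t(x))^{d+1} = O(t(x))$ is integrable and $\mu(c,\infty) \to 0$ as $c \to \infty$. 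The same decay implies $c\, t(c) \to 0$, and then $(1 - t(c))^d, (1 - t(c))^{d+1} \to 1$ reduce $g(c)$ to $c/(d+1) - 1 + o(1)$, whence $h(c) \to 0$ as well. So it suffices to show $h'(c) = -(1 - s(c))/(d+1)$ on $(0,\infty) \setminus \{c_*\}$ together with continuity of $h$ at $c_*$.

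On $(0, c_*)$ both sides equal $-1/(d+1)$ trivially. The main content lies on $(c_*, \infty)$. There I would differentiate the defining relation $-\ln t(c) = c(1 - t(c))^d$ implicitly, which after rearrangement yields
\[
    t'(c)\bigl[1 - c d\, t(c)(1 - t(c))^{d-1}\bigr] = -t(c)(1 - t(c))^d.
\]
Applying the product rule to the three summands of $g(c) = c\, t(1-t)^d + \tfrac{c}{d+1}(1-t)^{d+1} - (1-t)$ and collecting the $t'(c)$ terms, one finds that the coefficient of $t'(c)$ is exactly $1 - c d\, t(1-t)^{d-1}$. Substituting the implicit-differentiation identity eliminates $t'$ completely, leaving $g'(c) = (1 - t(c))^{d+1}/(d+1)$. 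Therefore
\[
    h'(c) = \frac{(1 - t(c))^{d+1}}{d+1} - \frac{1}{d+1} = -\frac{1 - s(c)}{d+1},
\]
matching the derivative of $\mu(c,\infty)$ on $(c_*, \infty)$.

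What remains is the continuity of $h$ at $c_*$, which reduces to the identity $g(c_*^+) = 0$. Since $t(c) \to t_*$ as $c \downarrow c_*$ by Lemma~\ref{lem:c tc Limiting Behaviour}, this is equivalent to the algebraic relation
\[
    c_* t_* (1 - t_*)^d + \frac{c_*}{d+1}(1 - t_*)^{d+1} = 1 - t_*,
\]
which I would derive by combining $c_*(1 - t_*)^d = -\ln t_*$ with the defining equation $(d+1)(1-t_*) + (1 + d t_*)\log t_* = 0$ of $t_*$; eliminating $\log t_*$ gives $c_*(1 - t_*)^{d-1}(1 + d t_*) = d+1$, after which substitution is routine. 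The main obstacle I foresee is simply the bookkeeping in the derivative computation for $g'(c)$: one must verify that the explicit $t'(c)$ contributions from all three summands collect into precisely the bracket $1 - c d\, t(1-t)^{d-1}$ that the implicit-differentiation identity handles. Once that algebraic alignment is in place, continuity at $c_*$ together with the common limit at infinity upgrades the derivative identity to $h = \mu(\cdot, \infty)$ on $[0, \infty)$.
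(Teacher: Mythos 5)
Your proof is correct and follows the same strategy as the paper's: show both functions have the same derivative away from $c_*$, verify continuity of $h$ at $c_*$, and use the common limit $0$ at infinity. The only difference is that you carry out the implicit differentiation of $g$ and the algebraic verification of $g(c_*^+)=0$ by hand (both computations check out, e.g.\ $t'[1-cdt(1-t)^{d-1}]=-t(1-t)^d$ does cancel the $t'$ terms to leave $g'(c)=(1-t(c))^{d+1}/(d+1)$), whereas the paper outsources the derivative formula to \cite[Claim~5.3]{linial2016phase} and leaves the continuity check at $c_*$ implicit.
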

\begin{proof}
Lemma~\ref{lem:c tc Limiting behavior} shows  that the density function $s(x)$ is continuous everywhere for $d = 1,$ and discontinous only at $c_*$ for $d \geq 2.$ This implies that the measure $\mu$ is well-defined.

With regards to the other claim, we first show that  $\mu(c,\infty)$ is finite for all $c \geq 0.$  Clearly,
\[
    \mu(c,\infty) \leq \frac{2}{d+1}\max\left\{\int_0^{c_*} (1 - s(x)) \df{x}, \int_{c_*}^{\infty} (1 - s(x)) \df{x}\right\}.
\]
Separately, since $0 \leq t(x) \leq 1$ for all $x \geq c_*$, it follows from \eqref{d:s.Val} and Lemma~\ref{lem:c tc Limiting behavior} that $1 - s(x) = O(t(x)) = O(e^{-x/2^d}).$ From these observations, it is then easy to see that $\mu(c,\infty)$ is finite, as desired.

Next, we claim that $h$ is continuous over $[0, \infty).$ For $c \neq c_*,$ this follows from $h$'s definition and  Lemma~\ref{lem:c tc Limiting behavior}. At $c_*,$ this holds due to  \eqref{d:t_*} and \eqref{d:c_*} and since $\lim_{c \to c_*^+} t(c) =  t_*,$ which imply $\lim_{c \to c_*^{+}} g(c) = 0.$


Our final claim is that $\lim_{c \to \infty} h(c) = \mu(c, \infty) = 0$ and $h'(c) = \frac{\df}{\df c}\mu(c, \infty)$ for all $c \neq c_*.$ This and the continuity of $h(c)$ and $\mu(c, \infty)$ at $c_*$ will then show that  $h(\cdot) = \mu(\cdot, \infty),$ as desired. 

Since $\mu(c, \infty), c \geq 0,$ is finite, we have $\lim_{c \to \infty} \mu(c, \infty) = 0.$ In contrast,  $t(c) \in (0, 1]$ implies $h(c) = O(ct(c))$ as $c \to \infty;$ this when combined with  Lemma~\ref{lem:c tc Limiting behavior} then shows that $\lim_{c \to \infty} h(c) = 0.$     

Regarding the statement on the derivatives, we begin by showing that the two functions are differentiable for all $c \neq c_*.$ This statement holds for $h$ on account of \eqref{d:func.g} and \cite[Claim~5.3]{linial2016phase}, while it is true for $\mu(\cdot, \infty)$ simply by definition. From \cite[Claim~5.3]{linial2016phase}, we also have that
\[
    h'(c) =
    \begin{cases}
        - \frac{1}{d + 1}, & c < c_*, \\
        - \frac{1}{d + 1}\big[1 - \big(1 - t(c)\big)^{d + 1}\big], & c > c_*.
    \end{cases}
\]
From this, it is then easy to see that $h'(c) = \frac{\df}{\df c}\, \mu(c,\infty)$ for all $c \neq c_*$, as desired.
\end{proof}

\begin{proof}[Proof of Proposition~\ref{prop:mu.well.defined}]
This follows from Lemma~\ref{lem:h is in fact int of 1 - shadow density} which shows that $\mu(0, \infty) = h(0) = 1.$
\end{proof}

\begin{proof}[Proof of Proposition~\ref{prop:conv on Intervals}]
Let $\{D_i\}$ be the set of death times in the $(d - 1)$-th persistence diagram of the filtration $\left(\cLn(s)\right)_{s \in [0, 1]}.$ Then, for all $n \geq c,$ we have
\begin{align}
\binom{n - 1}{d} \mun (c, \infty) = \sum_{\sigma \in \Mn}\delta_{n w(\sigma)}(c,\infty)
= {} &  \left|\left\{\sigma \in \Mn : w(\sigma) > \frac{c}{ n}\right\}\right| \nonumber \\
= {} & \left|\left\{i : D_i > \frac{c}{ n}\right\}\right| \label{e:conv.death.times} \\
= {} & \beta_{d - 1}\left(\cLn\left(\frac{c}{n}\right) \right) \nonumber \\
\overset{d}{=} {} & \beta_{d - 1}\left(Y\left(n,\frac{c}{n}\right)\right), \nonumber
\end{align}
where \eqref{e:conv.death.times} follows due to \cite[Theorem~3]{skraba2017randomly}, while the last relation holds since $\cLn\left(c/n \right)$ has the same distribution as $Y(n, c/n).$ From Lemma~\ref{lem:Beh d - 1 Betti Number} and the fact that ${n-1 \choose d}/n^d \to 1/d!$, it then follows that $\lim_{n \to \infty} \normq{\mun (c,\infty) - h(c)} = 0.$ The desired result now holds due to  Lemma \ref{lem:h is in fact int of 1 - shadow density}.
\end{proof}

\section{Extensions}
\label{sec:extensions}
We discuss three different extensions of Theorem~\ref{thm:bulk} here. The first one relates to the case where the $d$-face weights come from some generic distribution. The next one concerns the robustness of our result to noisy perturbations in the $d$-face weights. The third and final one is about the randomly weighted $Y(n, p)$ complex, wherein the set of $d$-faces is random and may not include all the potential ones.

\subsection{Generic distribution for $d$-face weights}
\label{subsec:Generic.Weights}

Let $(\cKn, \wg)$ be the weighted simplicial complex, where $\cKn$ is as before, while the weight function $\wg$ is such that i.) $\{\wg(\sigma): \sigma \in \cF^d\}$ are real-valued i.i.d. random variables with some generic distribution $F,$ and ii.) $\wg(\sigma) = 
 -\infty$ whenever $|\sigma| \leq d.$ 
%
%
We claim that a version of our result also holds in this setup. Let

%
\begin{equation}
    \label{d:empirical.measure.generic}
    \mun^g = \frac{1}{\binom{n - 1}{d}} \sum_{\sigma \in \Mn^g} \delta_{n F(\wg(\sigma))} \quad \text{ and } \quad \tmun^g := \frac{1}{\binom{n - 1}{d}} \sum_{\Delta \in \Dn^g} \delta_{n F(\Delta)},
\end{equation}
where $\Mn^g$ is the $d$-MSA in $(\cKn, \wg),$ and $\Dn^g$ is the set of death times in the associated  persistence diagram. 

\begin{corollary}
    \label{cor:generic.reults}
    Suppose $F$ is continuous. Then,
    Theorem~\ref{thm:bulk}  holds for $\mun^g$ and $\tmun^g.$ 
\end{corollary}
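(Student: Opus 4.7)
The plan is a probability integral transform that reduces the generic-distribution setup to the uniform $[0,1]$ setup already handled by Theorems~\ref{thm:bulk} and \ref{thm:extremes}. For each $\sigma \in \cF^d$, set $V(\sigma) := \sF(U(\sigma))$; since $\sF$ is continuous, the $V(\sigma)$'s are i.i.d.\ Uniform$[0,1]$. Consider the weight function $\sF \circ w$ on the same underlying simplicial complex (using the same sample $Y$ of $Y(n,p)$): it assigns $0$ to faces of dimension less than $d$ (since $\sF(-\infty) = 0$), the uniform weight $V(\sigma)$ to faces in $\cF^d(Y)$, and $1$ to missing $d$-faces (since $\sF(+\infty) = 1$). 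Hence the reweighted complex is distributed exactly as the uniform augmented $d$-complex of Definition~\ref{d:augmented.d.complex}, and Theorems~\ref{thm:bulk} and \ref{thm:extremes} apply to it directly.

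The key step will be to show that reweighting by $\sF$ pointwise converts the generic $d$-MSA and the $(d-1)$-persistence diagram into their counterparts for the reweighted uniform complex. The crux is order preservation: if $U(\sigma_1) \neq U(\sigma_2)$, then $\sF(U(\sigma_1)) \neq \sF(U(\sigma_2))$ almost surely. Indeed, the $V(\sigma)$'s are i.i.d.\ Uniform$[0,1]$, so by continuity of their law, any two (and hence, by a union bound, all of the finitely many) are almost surely pairwise distinct; combined with monotonicity of $\sF$, this forces $\sF$ to be strictly order-preserving on the realizations, almost surely. Because Kruskal's algorithm for the MSA depends only on the ordering of the weights, the generic MSA $\Mp$ and the MSA of the reweighted complex coincide as sets of $d$-faces, and the reweighted weight of $\sigma \in \Mp$ is exactly $\sF(w(\sigma))$. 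The same order-preservation argument applies to the generic filtration $\{\cLp|_s\}$ and its $\sF$-image, and the bijection $\Delta \mapsto \sF(\Delta)$ maps death times of the generic $(d-1)$-persistence diagram onto those of the reweighted one.

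Combining these observations, the measures defined in \eqref{d:empirical.measure.generic} (and their analogous death-time, extremal-weight, and extremal-death-time versions) for the generic complex agree almost surely with $\mup$, $\tmup$, $\nup$, $\tnup$ computed on the reweighted complex. Since the reweighted complex has the same distribution as $\cLp$ of Definition~\ref{d:augmented.d.complex}, both the bulk and the extremal conclusions of Theorems~\ref{thm:bulk} and \ref{thm:extremes} transfer verbatim to the generic setup, yielding the corollary.

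The main subtle point will be the order-preservation invoked above: one must rule out ties of the form $\sF(U(\sigma_1)) = \sF(U(\sigma_2))$ with $U(\sigma_1) \neq U(\sigma_2)$, which would corrupt Kruskal's algorithm and disrupt the bijection between persistence diagrams. Continuity of $\sF$ makes this a null event via the uniform-law argument noted above; this is the only technical ingredient beyond Theorems~\ref{thm:bulk} and \ref{thm:extremes}, and every remaining step is a direct translation between the two complexes.
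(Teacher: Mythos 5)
Your proposal is correct and follows essentially the same route as the paper: apply the probability integral transform $\sF$ to reduce to the uniform augmented $d$-complex of Definition~\ref{d:augmented.d.complex}, then note that the monotone reweighting preserves the $d$-MSA and the persistence diagram so the modified measures coincide with those of the reweighted complex. Your extra care in ruling out ties (almost-sure distinctness of the $\sF(U(\sigma))$'s) makes explicit a point the paper's proof passes over with the remark that distribution functions are non-decreasing, but it is a refinement of the same argument rather than a different one.
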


\begin{proof}
From $(\cK_n, \wg),$ we construct a new weighted complex $(\cKn, w_F),$ where $w_{F}(\sigma) = F(\wg(\sigma))$ for all $\sigma \in \cKn.$ Since $F$ is continuous,  $\{F(\wg(\sigma)): \sigma \in \cF^d\}$ is a set of i.i.d. $U[0, 1]$ random variables. Also, $F(\wg(\sigma)) = 0,$ whenever $|\sigma| \leq d.$ Hence, Theorem~\ref{thm:bulk}  readily apply to $(\cKn, w_F).$ Furthermore, when viewed in the context of $(\cKn, w_F),$ $\mun^g$ and $\tmun^g$ resemble the measures given in \eqref{d:empirical.measure}. The only thing that remains to be checked is if $
\Mn^g$ is a $d$-MSA in $(\cKn, w_F).$ However, since any distribution function is non-decreasing, this is trivially true. The desired claim is now easy to see. 
\end{proof}

\subsection{Noisy perturbations in $d$-face weights} 
We establish here the robustness of our result to additional noisy perturbations in the $d$-face weights. 

Consider the weighted complex $(\cKn, \wg'),$ where $\cKn$ is as before, and
\[
    \wg'(\sigma) = 
    \begin{cases}
        -\infty & \text{ if $|\sigma| \leq d,$} \\
        \wg(\sigma) + \epsilon_n(\sigma) & \text{ if $\sigma \in \cF^d.$} 
    \end{cases}
\]
In this sum, $\{\wg(\sigma): \sigma \in \cF^d\}$ are real-value i.i.d. random variables with some generic distribution $F,$ while $\{\epsilon_n(\sigma): \sigma \in \cF^d\}$ are a separate set of random variables denoting noisy perturbations in the $d$-face weights. Note that  these latter variables need not be independent nor identically distributed. Let
\[
    \mun' = \frac{1}{\binom{n - 1}{d}} \sum_{\sigma \in \Mn'} \delta_{n F(\wg'(\sigma))} \quad \text{ and } \quad \tmun' := \frac{1}{\binom{n - 1}{d}} \sum_{\Delta \in \Dn'} \delta_{n F(\Delta)},
\]
where $\Mn'$ is the $d$-MSA in $(\cKn, \wg'),$ and $D_n'$ is the set of death times in the associated  persistence diagram.

\begin{corollary}
\label{cor:Noisy.perturbations}
Suppose $F$ is Lipschitz continuous with Lipschitz constant $\zeta > 0,$ and $n \|\epsilon_n\|_\infty \to 0$ in probability, where $\|\epsilon_n\|_\infty := \max |\epsilon_n(\sigma)|.$ Then,  Theorem~\ref{thm:bulk} holds for $\mun'$ and $\tmun'.$ 
\end{corollary}
%

        
%
\begin{proof}
%
%
We only discuss the $\mun'$ result since the  $\tmun'$ case can be dealt with similarly. For the $\mun'$ case, it suffices to show that an analog of Proposition~\ref{prop:conv on Intervals} holds. This is because the arguments from the proof of Theorem~\ref{thm:bulk} can then be again used to get  the actual result.

Let $(\cKn, \wg)$ be the weighted complex defined in Section~\ref{subsec:Generic.Weights}, but this time we couple $\wg$ to the one in the definition of $\wg'.$ Also, let $\Mn^g$ denote the $d$-MSA in $(\cKn, \wg),$ and let $\mun^g$ be as in \eqref{d:empirical.measure.generic}. 

From\footnote{There is a typo in the statement of \cite[Lemma~38]{skraba2017randomly}. In the second and third displays, $\gamma(D_i)$ should be $\gamma(D_i')$ and $\gamma(\phi(\sigma_i))$ must be $\gamma(\phi'(\sigma'_i   )).$ This follows from Theorem~4 in ibid.}  \cite[Lemma~38]{skraba2017randomly}, we have
\[
    \inf_\gamma \max_{\sigma \in \cF^d} |\wg(\sigma) - \gamma(\wg(\sigma))| \leq \|\epsilon_n\|_\infty,
\]
where the infimum is over all the  bijections $\gamma: \{\wg(\sigma): \sigma \in \Mn^g\} \to \{\wg'(\sigma'): \sigma' \in \Mn'\}.$ Hence, for any measurable set $K \subseteq \Real,$ it follows that
\begin{equation}
    \label{e:subsetRelation}
    \mun^g(K^{-\xi_n}) \leq \mun'(K) \leq \mun^g(K^{\xi_n}),
\end{equation}
where $\xi_n := \zeta n \|\epsilon_n\|_\infty,$ and, for $\xi > 0,$
\[
    K^{\xi} := \{x \in \Real: |x - y| \leq \xi \text{ for some $y \in K$ } \} \quad \text{ and } \quad K^{-\xi} := \{x \in K: |x - y| > \xi  \text{ for all $y \not\in K$ }\}.
\]

Let $\xi > 0$ be arbitrary. Then,  
\begin{align}
    |\mun'(c, \infty) - \mun^g & (c, \infty)| \nonumber \\ 
    \leq {} & \max\left\{|\mun^g(c - \xi_n, \infty) - \mun^g(c, \infty)|, |\mun^g(c, \infty) - \mun^g(c + \xi_n, \infty)|\right\} \label{e:Approx.mup'.by.mup} \\
    \leq {} & \mun^g(c - \xi_n, c + \xi_n] \nonumber \\
    \leq {} & \mun^g(c - \xi, c + \xi] \indc[\xi_n \leq \xi] + \mun^g(c - \xi_n, c + \xi_n] \indc[\xi_n > \xi] \nonumber\\
    \leq {} & \mun^g(c - \xi, c + \xi] + \indc[\xi_n > \xi] \label{e:.mup.prob.measure} \\
    \leq {} & |\mun^g(c - \xi, c + \xi] - \mu(c - \xi, c+ \xi]| + \mu(c - \xi, c+ \xi] + \indc[\xi_n > \xi] \nonumber.
\end{align}
where \eqref{e:Approx.mup'.by.mup} follows from \eqref{e:subsetRelation}, while the second term in \eqref{e:.mup.prob.measure} is obtained by using the fact that $\mun^g(c - \xi_n, c+ \xi_n] \leq 1$ which itself holds since $\mun^g$ is a probability measure.

Now, Proposition~\ref{prop:conv on Intervals} applies to $\mun^g.$ This, along with the given condition on $\|\epsilon_n\|_\infty$ and the fact that $\xi$ is arbitrary, then shows that $|
\mun'(c, \infty) - \mun^g(c, \infty)| \to 0$ in $\Lq,$ as desired. 
\end{proof}

\subsection{Weighted $Y(n, p)$ complex}
\label{subsec:Linial.Meshulam.Complex} 
Consider the weighted simplicial complex $(Y, \wg),$ where $Y$ is a sample of $Y(n, p),$ and $\wg$ is such that $\{\wg(\sigma): \sigma \in \cF^d(Y)\}$ is a set of i.i.d. random variables with some generic distribution $F,$ while $\wg(\sigma) = -\infty$ whenever $|\sigma| \leq d.$ This complex differs from $(\cKn, \wg)$ since not all the potential $d$-faces may be present here. Despite this, we now show that a version of our result holds for this complex as well. 
Let $M^g_{n, p}$ be a $d$-MSA in $(Y, \wu)$ and $D^g_{n, p}$ the set of death times in the associated  persistence diagram, whenever $\beta_{d - 1}(Y) = 0$ (so that $M^g_{n, p}$ exists).  Further, let
\[
    \mup^g = \frac{1}{\binom{n - 1}{d}} \sum_{\sigma \in M^g_{n, p}} \delta_{n p F(\wg(\sigma))} \quad \text{ and } \quad \tmup^g := \frac{1}{\binom{n - 1}{d}} \sum_{\Delta \in D^g_{n, p}} \delta_{n p F(\Delta)},
\]
whenever $\beta_{d - 1}(Y) = 0,$ and some arbitrary probability measure otherwise. 


\begin{corollary}
Suppose $p = (d \log n + \omega_n)/n$ for any function $\omega_n$ that tends to infinity. Then, Thorem~\ref{thm:bulk} holds for $\mup^g$ and $\tmup^g.$
\end{corollary}
\begin{proof}
As in the proof of Corollary~\ref{cor:Noisy.perturbations}, it suffices to show that an analog of Proposition~\ref{prop:conv on Intervals} holds for $\mup^g.$

From $(Y, \wg),$ we construct a coupled weighted complex $(\cKn, w_Y),$ where $\cKn \supseteq Y$ is the complete $d$-skeleton on $n$ vertices (as before), and $w_Y: \cKn \to [0, 1]$ is the weight function given by 
\begin{equation}
\label{d:w_Y}
    w_Y(\sigma) = 
    \begin{cases}
    U[p, 1] & \text{if $\sigma \in \cF^d(\cKn)\setminus \cF^d(Y),$} \\
    p F(w_g(\sigma)) & \text{if $\sigma \in \cF^d(Y)$ or $|\sigma| \leq d.$} 
    \end{cases}
\end{equation}

In the above expression, $U[p, 1]$ is an independent uniform random variable on the interval $[p, 1].$ It is easy to see that $\{w_Y(\sigma): \sigma \in \cF^d(\cKn)\}$ is a set of i.i.d. $U[0, 1]$ random variables. Also, $w_Y(\sigma) = 0$ whenever $|\sigma| \leq d.$ Therefore, Theorem~\ref{thm:bulk} readily applies to $(\cKn, w_Y).$

Let $\mun$ be as in \eqref{d:empirical.measure}, but defined in the context of $(\cKn, w_Y).$ Similarly, let $\Mn$ denote a $d$-MSA in $(\cKn, w_Y).$ Note that a $d$-MSA always exists in this complex, since $\cKn$ is a complete $d$-skeleton. 

Now, suppose the event $\{\beta_{d - 1}(Y) = 0\}$ holds. Then, from \cite[Lemma~23]{skraba2017randomly},  a $d$-MSA $M^g_{n, p}$ exists in $(Y, \wg).$ Further, from \eqref{d:w_Y}, the $d$-faces of $\cKn,$ that are present in $Y,$ have weights smaller than those that aren't; the former have weights less than or equal to $p,$ while the latter at least $p.$ Combining this with the fact a distribution function is always monotone, it follows that $M^g_{n, p}$ is also a $d$-MSA in $(\cKn, w_Y).$ 

Therefore,
\begin{align*}
    |\mup^g(c, \infty) - \mun(c, \infty)| \leq {} & |\mup^g(c, \infty) - \mun(c, \infty)| \indc[\beta_{d - 1}(Y) = 0] +  |\mup^g(c, \infty) - \mun(c, \infty)| \indc[\beta_{d - 1}(Y) \neq 0] \\    
    \leq {} & |\mup^g(c, \infty) - \mun(c, \infty)| \indc[\beta_{d - 1}(Y) \neq 0] \\
    \leq {} & 2 \indc[\beta_{d - 1}(Y) \neq 0],
\end{align*}
where the second relation makes use of the fact that $\mup^g = \mun$ on the event $\{\beta_{d - 1}(Y) = 0\},$ while the last holds since $\mup^g$ and $\mun$ are probability measures.

Now, due to the condition on $p,$ it follows from \cite[Theorem~1.1]{linial2006homological} and \cite[Theorem~1.1]{meshulam2009homological} that  $\lim_{n \to \infty} \Pr\{\beta_{d- 1}(Y) = 0\} = 1.$ The desired result is now easy to see. 
\end{proof}

Figure~\ref{fig:Ordinary.and.Scaled.Histograms} illustrates the above result for the case where $F$ is the uniform distribution on $[0, 1]$ and $p$ is constant. We consider two different cases for the $(n, d)$ pair. In each case, we also consider three different values of $p,$
and look at the $d$-MSA $M_{n, p}$ in one sample each of the resultant $Y(n, p)$ complex; we resample if the MSA does not exist. Notice that all the panels have two distinct plots: the blue one is the histogram corresponding to $\{n w(\sigma): \sigma \in M_{n, p}\},$ while the yellow one corresponds to $\{n p w(\sigma) : \sigma \in M_{n, p}\}.$ Clearly, unlike the blue plots, the yellow ones look similar irrespective of the $p$ values. 


\begin{figure}[ht!]
\begin{subfigure}{.495\textwidth}
  \centering
  \includegraphics[width=\linewidth]{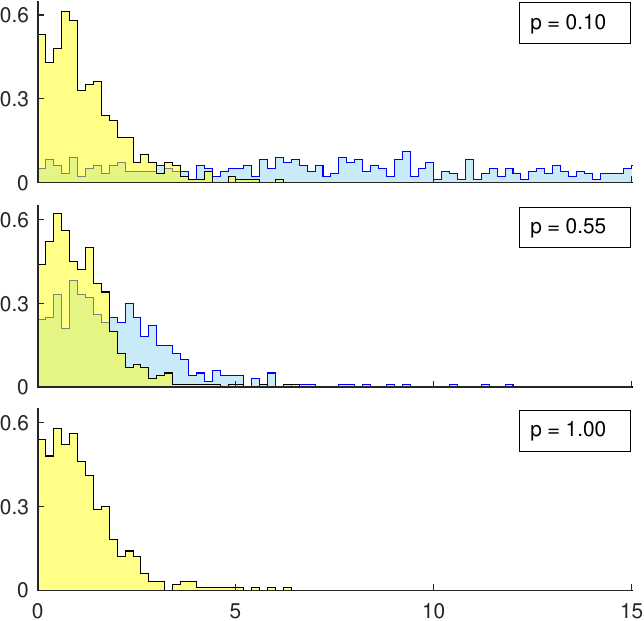}
  \caption{$n = 500, \quad d = 1$}
\end{subfigure}
\begin{subfigure}{.495\textwidth}
  \centering
  \includegraphics[width=\linewidth]{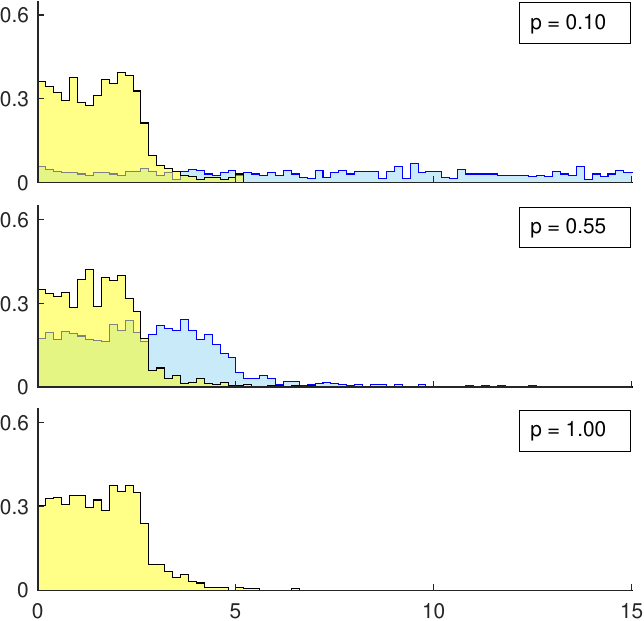}
  \caption{$n = 50, \quad d = 2$}
\end{subfigure}
\caption{Normalized histograms of $\{n p w(\sigma): \sigma \in M_{n, p}\}$ and $\{n w(\sigma): \sigma \in M_{n, p}\},$ depicted in yellow and blue, respectively. In the last panel, the two histograms coincide since $p = 1.$
\label{fig:Ordinary.and.Scaled.Histograms}}
\end{figure}


    

\section{Summary and Future Directions}
Our work went beyond the sum and studied the distribution of the individual weights in a random $d$-MSA. A key contribution here is an  explicit connection between the $d$-MSA weights and the $d$-shadow.

In future, one of the scenarios that we interested in exploring is the streaming setup. Here, the  $d$-faces along with their weights are revealed one by one and the $d$-MSA is then  incrementally updated by either including the face and then updating the $d$-MSA estimate or by discarding the face altogether. Note that we don't assume the faces are revealed in any order. The goal then is to understand how the distribution of the weights in the the $d$-MSA changes with time. 

Our preliminary conjecture on how the sum of these weights will change in the uniformly weighted setup is given in Figure~\ref{fig:Online.MSA}. In this figure, we consider two scenarios: i.) $n = 100$ and $d = 1$ and ii.) $n = 30$ and $d = 2.$ In both the scenarios, the weights of the $d$-faces are i.i.d.\ uniform $[0, 1]$ random variables which are revealed one at a time in an arbitrary order. That is, the weight of each $d$-face is initially presumed to be $1$ which then gets replaced with the actual value when it gets revealed. Accordingly, we begin with an arbitrary $d$-MSA and then sequentially update it as the weight of a new face gets revealed.

Let $M_n(k)$ denote the updated $d$-MSA after the weight of the $k$-th face is revealed. Also, let $c_1 = n/\tbinom{n}{d -1}$ and $c_2 =   \binom{n}{d + 1} \mu f$ for $f(x) = x.$ 
The blue and red curves in the figure show the value of $c_1 w(M_n(k))$ and $c_2/k,$ respectively. The  red curve is our guess for $c_1 M_n(k)$ based on the results in this paper. Finally, the black curve shows the constant value $\mu x.$  Clearly, the blue and red curves closely match and they both get close to the black one as the value of $k$ increases.

\begin{figure}[ht]
\begin{subfigure}{.495\textwidth}
  \centering
  \includegraphics[width=\linewidth]{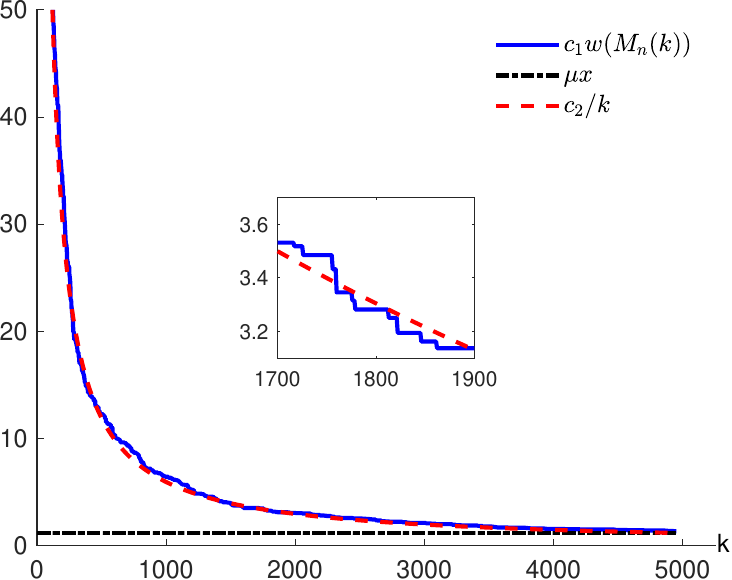}
  \caption{$n = 100, d = 1$}
  \label{fig:stream-first}
\end{subfigure}
\begin{subfigure}{.495\textwidth}
  \centering
  \includegraphics[width=\linewidth]{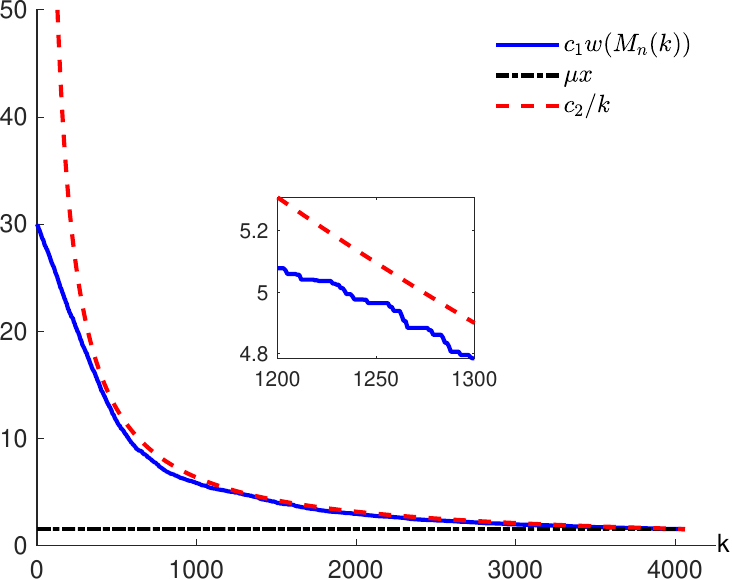}
  \caption{$n = 30, d = 2$}
  \label{fig:stream-second}
\end{subfigure}
\caption{Comparison of the scaled weight (blue) of the actual $d$-MSA with our conjecture (red). The values of $c_1$ and $c_2$ are $n/\binom{n - 1}{d}$ and $ \binom{n}{d + 1} \mu x,$ respectively. The black curve denotes the value of $\mu x.$ Recall that $\mu x$ is $\zeta(3) \approx 1.2$ for $d=1$ and $1.56$ for $d=2$.   \label{fig:Online.MSA}}
\end{figure}

Some of the other directions that we wish to pursue in the future are as follows. 
\begin{enumerate}
    \item  \emph{Geometric random graphs and complexes}: Extending our results from the \Erdos-\Renyi\ and Linial-Meshulam models to geometric random graphs and geometric complexes and understanding differences between the two scenarios.
    
    \item \emph{Large deviation results and central limit theorems}: Obtaining general large deviation results for the Kolmogorov  distance between $\mun$ and $\mu$ as well as central limit theorems characterizing the variance of the weight distribution.
    
    \item \emph{Rates of convergence}: Deriving the rate at which $\mun$ converges to $\mu.$ 

\end{enumerate}

\section*{Acknowledgments} 
The authors would like to thank Matthew Kahle, Omer Bobrowski, Primoz Skraba, Ron Rosenthal, Robert Adler, Christina Goldschmidt, D. Yogeshwaran, and Takashi Owada for useful suggestions. A portion of this work was done when Gugan Thoppe was a postdoc with  Sayan Mukherjee at Duke University. 

\bibliographystyle{amsplain}

\begin{thebibliography}{99}

\bibitem{adler2014article}
Robert Adler.
\newblock {TOPOS: Pinsky was wrong, Euler was right}.
\newblock
  \url{https://imstat.org/2014/11/18/topos-pinsky-was-wrong-euler-was-right/},
  2014.
\newblock [Online; accessed 14-Nov.-2020].

\bibitem{erdos1959random}
Paul Erd\H{o}s and Alfred R\'{e}nyi.
\newblock On random graphs {I}.
\newblock {\em Publicationes Mathematicae Debrecen}, 6(290-297):18, 1959.

\bibitem{Erdos:1960}
Paul Erd\H{o}s and Alfred R\'{e}nyi.
\newblock On the evolution of random graphs.
\newblock {\em Publ. Math. Inst. Hungary. Acad. Sci.}, 5:17--61, 1960.

\bibitem{frieze1985value}
Alan~M Frieze.
\newblock On the value of a random minimum spanning tree problem.
\newblock {\em Discrete Applied Mathematics}, 10(1):47--56, 1985.

\bibitem{hino2019asymptotic}
Masanori Hino and Shu Kanazawa.
\newblock Asymptotic behavior of lifetime sums for random simplicial complex
  processes.
\newblock {\em Journal of the Mathematical Society of Japan}, 2019.

\bibitem{kahle2016inside}
Matthew Kahle and Boris Pittel.
\newblock Inside the critical window for cohomology of random k-complexes.
\newblock {\em Random Structures \& Algorithms}, 48(1):102--124, 2016.

\bibitem{kalai1983enumeration}
Gil Kalai.
\newblock {Enumeration of Q-acyclic simplicial complexes}.
\newblock {\em Israel Journal of Mathematics}, 45(4):337--351, 1983.

\bibitem{linial2006homological}
Nathan Linial and Roy Meshulam.
\newblock Homological connectivity of random 2-complexes.
\newblock {\em Combinatorica}, 26(4):475--487, 2006.

\bibitem{linial2014extremal}
Nathan Linial, Ilan Newman, Yuval Peled, and Yuri Rabinovich.
\newblock Extremal problems on shadows and hypercuts in simplicial complexes.
\newblock {\em arXiv preprint arXiv:1408.0602}, 2014.

\bibitem{linial2016phase}
Nathan Linial and Yuval Peled.
\newblock On the phase transition in random simplicial complexes.
\newblock {\em Annals of Mathematics}, pages 745--773, 2016.

\bibitem{meshulam2009homological}
Roy Meshulam and Nathan Wallach.
\newblock Homological connectivity of random k-dimensional complexes.
\newblock {\em Random Structures \& Algorithms}, 34(3):408--417, 2009.

\bibitem{skraba2017randomly}
Primoz Skraba, Gugan Thoppe, and D~Yogeshwaran.
\newblock {Randomly Weighted $ d$- complexes: Minimal Spanning Acycles and
  Persistence Diagrams}.
\newblock {\em Electronic Journal of Combinatorics}, 27(2), 2019.


\bibitem{kahle2014topology}
Matthew Kahle.
\newblock {Topology of random simplicial complexes: a survey}.
\newblock {\em AMS Contemporary Mathematics}, 620, 201--222, 2014.

\end{thebibliography}


\begin{dajauthors}
\begin{authorinfo}[nico]
  Nicolas Fraiman\\
  Assistant Professor\\
  University of North Carolina at Chapel Hill\\
  Chapel Hill, NC, United States\\
  fraiman\imageat{}email\imagedot{}unc\imagedot{}edu \\
  \url{https://fraiman.web.unc.edu/}
\end{authorinfo}
\begin{authorinfo}[sayan]
  Sayan Mukherjee\\
  Professor\\
  Duke University, \\
  Durham, NC, United States\\
  Center for Scalable Data Analytics and Artificial Intelligence, Universit\"at Leipzig \\
Max Planck Institute for Mathematics in the Sciences\\
Leipzig, Saxony, Germany
  sayan.mukherjee\imageat{}mis\imagedot{}mpg\imagedot{}de \\
  \url{https://sayanmuk.github.io/}
\end{authorinfo}
\begin{authorinfo}[gugan]
  Gugan Thoppe\\
  Assistant Professor\\
  Indian Institute of Science\\
  Bengaluru, India\\
  gthoppe\imageat{}iisc\imagedot{}ac\imagedot{}in\\
  \url{https://sites.google.com/site/gugancth/}
\end{authorinfo}
\end{dajauthors}

\end{document}